\theoremstyle{plain}
\newtheorem{theorem}{Theorem}[section]
\newtheorem{lemma}[theorem]{Lemma}
\newtheorem{proposition}[theorem]{Proposition}
\theoremstyle{definition}
\newtheorem{remark}[theorem]{Remark}
\newtheorem{theoremA}{Theorem}
\def\txtd{{\textnormal{d}}}
\def\cO{\mathcal{O}}
\def\cA{\mathcal{A}}
\def\cF{\mathcal{F}}
\def\cC{\mathcal{C}}
\def\cL{\mathcal{L}}
\def\bR{\mathbb{R}}
\def\bP{\mathbb{P}}
\def\bE{\mathbb{E}}
\newcommand{\rmd}{\mathrm{d}}
\title{On the pitchfork bifurcation for the Chafee-Infante equation with additive noise}
\author{Alex Blumenthal\thanks{School of Mathematics, Georgia Institute of Technology, Atlanta, GA 30332, USA. E-Mail:~ablumenthal6@gatech.edu}~, Maximilian Engel\thanks{Department of Mathematics, Free University of Berlin, Arnimallee 6, 14195 Berlin, Germany.~E-Mail:~maximilian.engel@fu-berlin.de}~~and~Alexandra Neam\c tu\thanks{Department of Mathematics and Statistics, University of Konstanz,  Universit\"atsstr.~10, 78464 Konstanz, Germany. E-Mail:~alexandra.neamtu@uni-konstanz.de}}
\numberwithin{equation}{section}
\begin{document}
	
	\maketitle

	\begin{abstract}
	We investigate pitchfork bifurcations for a stochastic reaction diffusion equation perturbed by an infinite-dimensional Wiener process. It is well-known that the random attractor is a singleton, independently of the value of the bifurcation parameter; this phenomenon is often referred to as the ``destruction'' of the bifurcation by the noise. Analogous to the results of [Callaway et al., AIHP Probab. Stat., 53:1548-1574, 2017] for a 1D stochastic ODE, 
we show that some remnant of the bifurcation persists for this SPDE 
model in the form of a positive finite-time Lyapunov exponent. Additionally, we prove finite-time expansion of volume with increasing dimension as the bifurcation parameter crosses further eigenvalues of the Laplacian.

%
%
%
%
%
	\end{abstract}

	{\bf Keywords:} stochastic partial differential equations, singleton attractors, Lyapunov exponents, stochastic bifurcations.
	
	{\bf Mathematics Subject Classification (2020):} 60H15, 60H50, 37L55, 37H20.
	\maketitle

	\section{Introduction} 

	We study bifurcations for the reaction diffusion equation known as the \emph{Chafee-Infante equation}, under perturbations by infinite-dimensional additive noise. The Chafee-Infante equation without noise, in close relation to other reaction diffusion systems with cubic nonlinearity such as the Allen-Cahn or the Nagumo equation, is a well-studied parabolic partial differential equation (PDE) with global attractor whose bifurcation behaviour is fully understood as a cascade of pitchfork bifurcations. We employ the viewpoint of random dynamical systems theory (see e.g.~\cite{Arnoldbook}) to detect a similar bifurcation pattern for the noisy case.
	
	In more detail, we consider the following stochastic partial differential equation (SPDE) with Dirichlet boundary conditions on a bounded domain $\cO\subset\mathbb{R}$, say $\cO=[0,L]$,
	\begin{align}\label{spde_intro}
	\begin{cases}
	\txtd u = (\Delta u + \alpha u - u^3) ~\txtd t + \sqrt{Q}\txtd W_t, \\
	u(0)=u_{0}\in H, \quad u|_{\partial \cO} =0,
	\end{cases}
	\end{align}
	where $\alpha \in \mathbb R$ is the deterministic bifurcation parameter, $H:=L^2(\cO)$ is the state space and $(W_t)_{t \in \mathbb{R}}$ denotes a two-sided $H$-cylindrical Wiener process, with covariance operator $Q$ as specified in Section~\ref{sec:prelim}.

	In the case without noise (see e.g.~\cite{Robinson}), 
 the Chafee-Infante equation is well-posed, yielding a semigroup $S(t)$ on $H$ for which one can find a global attractor $A$, i.e., $A$ is a compact invariant subset of $H$ ($S(t) A = A$ for all $t \geq 0$) which attracts the orbits of all bounded subsets of $H$. 
	Starting with the homogeneous zero solution for $\alpha < \lambda_1$, where $\lambda_i$ denote the eigenvalues of the Laplacian $-\Delta$, an unstable direction and two new stationary solutions are added whenever $\alpha$ passes $\lambda_n$; the attractor then consists of these stationary points together with their respective unstable manifolds.
	
	In the case with noise, the attractor becomes a random object, a so-called \emph{random attractor} $A(\omega)$, whose position in the state space depends on the noise realization. It has been shown in \cite[Section 6]{Caraballoetal} that,
if $Q$ is bounded and invertible with bounded inverse (e.g. space-time white-noise), for any value of $\alpha$, the random attractor $A(\omega)$ of~\eqref{spde_intro} consists of a single point almost surely, i.e., there exists a random variable $a = a_\alpha:\Omega\to H$ with
	\begin{align*}
	\varphi(t,\omega,a(\omega))=a(\theta_t\omega)\quad \mbox{ for every } t\geq 0 \quad \mathbb{P}-a.s.,
	\end{align*}
	such that $A(\omega)=\{a(\omega)\}$. This phenomenon is often  called \emph{synchronization by noise} and has been studied  thoroughly in recent years for finite-dimensional SODEs \cite{CrauelFlandoli, Tearne, FlandoliGessScheutzowPTRF} and SPDEs \cite{FlandoliGessScheutzowAnnProb, Caraballoetal, Bloemker}. 
 The proof of synchronization for \eqref{spde_intro} in \cite[Section 6]{Caraballoetal}, adapting ideas from \cite{CrauelFlandoli}, uses the correspondence between stationary measures and attractors and a monotonicity argument. These ideas on synchronization carry over in our setting, where the covariance operator $Q$ of the noise will be given by a negative fractional power of the Laplacian, in order to ensure a suitable regularity of the solution according to Section~\ref{sec:prelim}. In a similar spirit, results on synchronization for~\eqref{spde_intro} with Neumann-boundary conditions (and $
\alpha=0$) have been derived in~\cite{scheutzow} provided that the noise is $H^{2s}(\cO)$-valued for $s>\frac{1}{4}$. Finally, for~\eqref{spde_intro} driven by space-time white-noise in two and three dimensions, a similar synchronzation phenomenon has been observed in \cite{GessT}.


	Synchronization can be interpreted in the sense that the noise ``destroys'' the pitchfork bifurcation, since the attractor $A(\omega) = \{ a(\omega)\}$ remains a single point for all $\alpha \in \mathbb R$. This interpretation was embraced by Crauel and Flandoli \cite{CrauelFlandoli}  for the stochastically-forced pitchfork normal form on $\mathbb{R}$ given by
	\begin{equation} \label{eq:SODE}
	\rmd x = (\alpha x - x^3) \rmd t + \rmd W_t \, .
	\end{equation}
 There, they showed that the random attractor $A(\omega) \subset \mathbb{R}$ consists of a point for all values of $\alpha$, hence all trajectories synchronize to a random equilibrium. It follows that at the level of the asymptotic dynamics, the bifurcation at $\alpha = 0$ is ``destroyed'' and no switch to local instability occurs. 


	Callaway et al.~\cite{Callawayetal} challenged this point of view by measuring local stability for trajectories of the one-dimensional SODE model \eqref{eq:SODE} on finite time scales, using \emph{finite-time Lyapunov exponents} (FTLEs).  They proved that, whereas all FTLEs are negative for $\alpha < 0$, there is always a positive probability to observe positive FTLEs for $\alpha >0$; \emph{this change of in FTLEs corresponds with a transition from uniform to non-uniform attractivity of the attractor and a loss of (uniform) hyperbolicity}. 
	The authors also showed that there is no uniformly continuous topological conjugacy between the dynamics for negative and positive $\alpha$. A similar result was proved for stochastic Hopf bifurcations with additive noise in \cite{DoanEngeletal}.

An alternative but strongly related problem is to fix the parameter $\alpha > 0$ in~\eqref{eq:SODE} (or $\alpha > \lambda_1$ in~\eqref{spde_intro} respectively), introduce a coefficient $\epsilon > 0$ in front of the noise term $\txtd W_t$ in \eqref{eq:SODE} (or \eqref{spde_intro} respectively), and determine for $\epsilon \ll 1$ the extent to which the random motion resembles that of the deterministic ($\epsilon = 0$) system. For the latter, the bulk of initial conditions relax to one of two stable solutions, while when $\epsilon > 0$ these solutions are \emph{metastable}, with typical random trajectories transitioning from the vicinity of one to the other, reflecting the fact that the system admits a unique stationary measure. 
Large deviations estimates provide a way of estimating the typical timescale of these
transitions, thereby giving another perspective on the ``persistence'' of the 
bifurcation in the presence of noise. For the system considered in this paper, this 
metastability picture was studied in \cite{ barret2015sharp, berglund2013sharp}.

	\subsubsection*{Statement of results}
	
	The purpose of this manuscript is to extend the mentioned ideas from \cite{Callawayetal, DoanEngeletal} to the infinite-dimensional setting, demonstrating
	similar bifurcation behavior for the random dynamical system induced by the SPDE~\eqref{spde_intro}.
	Let again $ 0 < \lambda_1 < \lambda_2 < \dots$  denote the eigenvalues of $- \Delta$ on $\cO=[0,L]$ with Dirichlet boundary conditions. 
	Given an initial condition $u_0 \in H$ and a sample $\omega \in \Omega$, the FTLE at time $t$ along the trajectory $\varphi^t_\omega(u)$ is given by
	\begin{equation} \label{eq:Lambdai}
	\Lambda_1(t;  \omega, u_0) = \frac1t \ln \| D_{u_0} \varphi^t_\omega\|_H
	\end{equation}
	where $D_{u_0} \varphi^t_\omega$ is the linear operator on $H$ obtained by Fr\'echet differentiating the cocycle $ \varphi^t_\omega$ at $u_0$, and $\| \cdot \|_H$ denotes the norm on $H$. 
	
In line with \cite{Callawayetal} and in concert with our goal of assessing changes in the attractivity of the singleton attractor $a(\omega)$ of \eqref{spde_intro}, we are primarily\footnote{See Remark \ref{rmk:smallInitialData} below for a discussion of FTLEs at more general initial data.} interested in the FTLE
	\[
	\Lambda_1(t; \omega) := \Lambda_1(t; \omega, a(\omega))
	\]
	 taken \emph{along the attractor $a(\omega)$.}

	
	
	\begin{theoremA}\label{thm:topLEEst}
		The random dynamical system induced by the SPDE~\eqref{spde_intro} exhibits a bifurcation at $\alpha = 0$ in the following sense:
		\begin{enumerate}[(a)]
			\item Unconditionally, we have that for all $\alpha \in \mathbb R$,
			\[
			\Lambda_1(t; \omega) \leq \alpha - \lambda_1 \quad \text{with probability 1 for all }t \geq 0 \,. 
			\]
			In particular, $\Lambda_1(t; \omega) < 0$ with probability 1 for all $\alpha < \lambda_1$.  
			
			
			\item For any $\alpha > \lambda_1,  0 < \delta \ll \alpha - \lambda_1$ and $T > 0$, there is a positive-probability event $\cA \subset \Omega$ such that 
			\[
			\Lambda_1(t; \omega) \geq  \alpha - \lambda_1 - \delta > 0 \quad \text{ for all } \omega \in \cA \, , t \in [0, T]  \,. 
			\]
		\end{enumerate}
	\end{theoremA}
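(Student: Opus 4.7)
The plan is a pathwise energy estimate. Fixing $\omega$ and the trajectory $u(t)=\varphi^t_\omega(u_0)$, the linearization $v(t)=D_{u_0}\varphi^t_\omega v_0$ along a unit tangent $v_0\in H$ satisfies $\partial_t v=\Delta v+\alpha v-3u(t)^2 v$, which is pathwise deterministic thanks to the additive nature of the noise. Taking the $L^2$ inner product with $v$ yields
\begin{equation*}
\tfrac{1}{2}\tfrac{\rmd}{\rmd t}\|v\|_H^2=-\|\nabla v\|_H^2+\alpha\|v\|_H^2-3\int_\cO u^2 v^2\,\rmd x;
\end{equation*}
Poincaré's inequality $\|\nabla v\|_H^2\geq\lambda_1\|v\|_H^2$ together with nonnegativity of the last term gives $\tfrac{\rmd}{\rmd t}\|v\|_H^2\leq 2(\alpha-\lambda_1)\|v\|_H^2$. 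Gronwall and a supremum over unit $v_0$ yield $\|D_{u_0}\varphi^t_\omega\|_{H\to H}\leq e^{(\alpha-\lambda_1)t}$ almost surely, proving (a).

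\textbf{Part (b), analytic step.} For the lower bound I would test against $v_0=e_1$ and rescale: setting $\tilde v(t):=e^{-(\alpha-\lambda_1)t}v(t)$, the equation becomes $\partial_t\tilde v=(\Delta+\lambda_1)\tilde v-3u(t)^2\tilde v$. The unperturbed semigroup $e^{t(\Delta+\lambda_1)}$ fixes $e_1$ and contracts on $e_1^\perp$ at rate $\lambda_2-\lambda_1>0$, so a spectral-gap perturbation argument---decomposing $\tilde v(t)=c(t)e_1+w(t)$ with $\langle w,e_1\rangle=0$ and Gronwall-estimating $c$ and $w$ separately using that the $u^2\tilde v$ terms are of size $O(\|u\|_{L^\infty}^2\|\tilde v\|_H)$---shows that $\sup_{t\in[0,T]}\|u(t)\|_{L^\infty}\leq\varepsilon$ forces $\|\tilde v(t)\|_H\geq 1-C\varepsilon^2 T$ uniformly on $[0,T]$. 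Consequently $\Lambda_1(t;\omega)\geq\alpha-\lambda_1-\delta$ uniformly in $t\in[0,T]$ once $\varepsilon=\varepsilon(\delta,T)$ is chosen sufficiently small.

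\textbf{Part (b), probabilistic step.} It remains to realize a positive-probability event on which the attractor trajectory $t\mapsto a(\theta_t\omega)$ is uniformly $\varepsilon$-small in $L^\infty$ on $[0,T]$. I would argue by a Wiener support theorem on a long window $[-S,T]$: pick a smooth Cameron-Martin path $w^*:[-S,T]\to H$ which is constant on $[-S/2,T]$ and which, when inserted into the controlled equation $\partial_t u^*=\Delta u^*+\alpha u^*-(u^*)^3+\dot w^*$, steers $u^*$ from a fixed reference initial datum into $B_{\varepsilon/2}(0)\subset L^\infty$ by time $-S/2$; since $\dot w^*\equiv 0$ on $[-S/2,T]$ and $0$ is a drift equilibrium, $u^*$ then remains in $B_{\varepsilon/2}(0)$ on $[-S/2,T]$ automatically. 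The Wiener support theorem, together with continuous dependence of the SPDE on the noise path (immediate for additive noise via the splitting $u=z+y$ where $z$ is the associated Ornstein--Uhlenbeck process), yields $\bP(\cA)>0$ for the event $\cA$ on which $W(\omega)$ tracks $w^*$ closely enough to force $\|u(t)-u^*(t)\|_{L^\infty}<\varepsilon/2$ throughout $[-S,T]$. On $\cA$, the pullback characterization of the singleton attractor identifies $a(\omega)=u(0)\in B_\varepsilon(0)$ and the required uniform smallness of $u$ on $[0,T]$ follows.

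\textbf{Main obstacle.} The hardest step will be the probabilistic construction, specifically producing $\cA$ with \emph{uniform} $L^\infty$-smallness on the whole window $[0,T]$. This is genuinely a controllability question for the SPDE: one must exhibit a Cameron-Martin control, i.e.\ in the range of $Q^{1/2}$, steering the system into a neighborhood of the \emph{unstable} equilibrium $0$, and then invoke the support theorem in a norm strong enough to feed back into the linearization estimate. Matching admissible controls with $\mathrm{Range}(Q^{1/2})$ and upgrading the continuous-dependence bound from the $H$-norm to an $L^\infty$-type norm are the main technical hurdles; everything else reduces to the standard energy estimate of (a) or to the spectral-gap perturbation theory indicated above.
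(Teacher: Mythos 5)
Your part (a) is exactly the paper's argument: pair the variational equation with $v$, drop the nonnegative term $3\int u^2 v^2$, apply the spectral bound $(\alpha+\Delta)\leq\alpha-\lambda_1$, and integrate.

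For part (b), your analytic step is correct but takes a different route from the paper. You rescale by $e^{-(\alpha-\lambda_1)t}$, split along $e_1$ and $e_1^\perp$, and Gronwall the two components separately using the spectral gap $\lambda_2-\lambda_1$ and the $O(\varepsilon^2)$ smallness of the perturbation. The paper instead introduces the indefinite quadratic form $Q_\delta(v)=\delta\|\pi_1 v\|^2-\|\pi_1^\perp v\|^2$ and proves the single differential inequality $\tfrac12\tfrac{\rmd}{\rmd t}Q_\delta(v_t)\geq(\alpha-\lambda_1-2\delta)Q_\delta(v_t)$, which simultaneously yields cone invariance and the exponential lower bound; the real payoff is that this cone/quadratic-form device transfers verbatim to the wedge powers $\wedge^k H$ and is what makes Theorem B go through. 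Both work for the top exponent, but your version would have to be redone from scratch for the volume estimates.

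The probabilistic step is where your approach genuinely diverges from the paper and where the gap is. You propose controllability plus the Wiener support theorem: steer a fixed trajectory into $B_{\varepsilon/2}(0)$, hold it there with a zero control, and use continuous dependence to get a positive-probability tube. Two issues. First, the identification ``$a(\omega)=u(0)$'' at the end is not justified: the random attractor is the pullback limit $\lim_{S\to\infty}\varphi^S_{\theta^{-S}\omega}(u_0)$, and on your finite control window $[-S,T]$ the controlled trajectory at time $0$ need not coincide (or even be close) to $a(\omega)$ without a quantitative attraction rate that is uniform over the event $\cA$ — which you do not have. Second, $0$ is an \emph{unstable} drift equilibrium for $\alpha>\lambda_1$, so ``$0$ is a drift equilibrium and $u^*$ therefore stays in $B_{\varepsilon/2}(0)$'' is false: once the control is switched off, the deterministic dynamics push $u^*$ away from $0$ on the $e_1$ direction, so you cannot hold the reference trajectory in a small ball for a long time by inaction. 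The paper sidesteps both problems. Proposition~\ref{h1:attractor}(b) uses that the law of $a_\alpha(\omega)$ is the stationary measure, which is locally positive on $H^1_0(\cO)$, so $\{\|a_\alpha(\omega)\|_V<\eta\}$ is a positive-probability event in $\cF^0_{-\infty}$. Intersecting with the independent $\cF^T_0$-measurable event that the Ornstein--Uhlenbeck process stays $\eta$-small in $V$ on $[0,T]$, one then propagates smallness of $a_\alpha(\theta^s\omega)$ forward over $[0,T]$ in $V$ by a mild-solution estimate and the singular Gronwall inequality (with the $V\hookrightarrow L^\infty$ Sobolev embedding in 1D supplying the $L^\infty$ control you want). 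This avoids controllability entirely and produces the needed $\cF^T_{-\infty}$-measurable event directly. If you want to salvage your approach, you would at minimum need to (i) keep the control active throughout $[-S/2,T]$ to counteract the instability at $0$, (ii) verify that such a control lies in $\mathrm{Range}(Q^{1/2})$, and (iii) supply a separate uniform pullback-attraction argument on $\cA$ to relate the controlled trajectory at time $0$ to $a(\omega)$.
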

	Statements (a) and (b) together imply loss of uniform hyperbolicity of the system at $\alpha =\lambda_1$ corresponding with the deterministic pitchfork bifurcation of the PDE. 
	
	Our second result concerns the cascade of deterministic bifurcations when $\alpha$ crosses more and more eigenvalues $\lambda_k$. We are able to capture this by finite-time expansion of $k$-dimensional volumes, as measured by the quantities
	\[
	V_k(t; \omega, u_0) := \frac1t \log \| \wedge^k D_{u_0} \varphi^t_\omega \|_{\wedge^k H} \,, \quad V_k(t; \omega) := V_k(t; \omega, a(\omega)) \,,
	\]
	$k \geq 1$. 
	Here, $\wedge^k$ denotes the $k$-fold wedge product of a linear operator; see Section \ref{subsubsec:wedges} for
	details. Equivalently, $V_k$ can be characterized by volume growth: for a bounded linear operator $A$ on $H$, we have the identity
	\[
	\| \wedge^k A \|_{\wedge^k H} = \max\{  |\det(A|_E)|  : E \subset H , \dim E = k\}
	\]
	relating $\| \wedge^k A\|_{\wedge^k H}$ with the maximal volume growth $A$ exhibits along a $k$-dimensional subspace of $H$. Here, $\det(A|_E)$ is the determinant of $A|_E : E \to A(E)$ regarded
	as a linear operator between $E$ and $A(E)$, and we follow the convention $\det(A|_E) = 0$ if 
	$\dim A(E) < \dim E$.  
	
	\begin{theoremA}\label{thm:volumeFTLE} Let $k \geq 1$ be arbitrary. 
		\begin{enumerate}[(a)]
			\item Unconditionally, we have that for all $\alpha \in \mathbb R$, 
			\[
			V_k(t; \omega) \leq \sum_{i = 1}^k (\alpha - \lambda_i) \quad \text{with probability 1 for all }t \geq 0 \,. 
			\]
			\item For any $\alpha > \frac{1}{k} \sum_{i = 1}^k \lambda_i, 0 < \delta \ll \sum_{i=1}^k (\alpha - \lambda_i)$ and $T > 0$, 
			there is a positive probability event $\cA \subset \Omega$ such that
			\[
			V_k(t; \omega) \geq  \sum_{i = 1}^k (\alpha - \lambda_i) - \delta \quad \text{ for all } \omega \in \cA \, , t \in [0, T]  \,. 
			\]
		\end{enumerate}
	\end{theoremA}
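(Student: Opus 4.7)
For part (a), my plan is to extend the argument for $\Lambda_1$ in Theorem A(a) from rank-one to rank-$k$ growth. The derivative cocycle $\Phi^t_\omega := D_{u_0}\varphi^t_\omega$ satisfies the self-adjoint linear evolution
\begin{equation*}
\partial_t \Phi^t_\omega = A(t)\, \Phi^t_\omega, \qquad A(t) := \Delta + \alpha - 3 u(t)^2,
\end{equation*}
where $u(t) = \varphi^t_\omega(u_0)$. For any $k$-dimensional subspace $E \subset H$, a standard Liouville-type identity gives
\begin{equation*}
\frac{\rmd}{\rmd t} \log |\det(\Phi^t_\omega|_E)| = \operatorname{tr}\bigl(P_{\Phi^t_\omega E}\, A(t)\, P_{\Phi^t_\omega E}\bigr),
\end{equation*}
where $P_F$ denotes orthogonal projection onto $F$. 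By the Ky Fan maximum principle, the trace of the self-adjoint operator $A(t)$ on any $k$-dimensional subspace is at most the sum of its top $k$ eigenvalues. Since $-3u^2 \leq 0$ as a multiplication operator, $A(t) \leq \Delta + \alpha$ in the self-adjoint order, and thus by Courant--Fischer the top $k$ eigenvalues of $A(t)$ are bounded termwise by $\alpha - \lambda_i$, $i = 1, \ldots, k$. Integrating in time and taking the supremum over $k$-dimensional $E$ yields the unconditional bound in (a).

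For part (b), the plan is to produce a positive-probability event $\cA \subset \Omega$ on which the trajectory $u(t)=\varphi^t_\omega(a(\omega))$ stays in a small ball around $0$ throughout $[0,T]$, and then deduce the lower bound by choosing the initial linearization subspace $E_0 = \operatorname{span}(e_1, \ldots, e_k)$. I would write $\cA = \cA_1 \cap \cA_2$, where $\cA_1$ depends only on the noise on $(-\infty, 0]$ and enforces $\|a(\omega)\|_H < \epsilon_1$, while $\cA_2$ depends only on the noise on $[0, T]$ and enforces $\sup_{t \in [0,T]}\|W_t - W_0\|_H < \epsilon_2$. The event $\cA_1$ has positive probability provided the stationary measure of~\eqref{spde_intro} assigns positive mass to every neighborhood of $0$ in $H$---a consequence of a Stroock--Varadhan-type support theorem exploiting the non-degeneracy of $Q$; the event $\cA_2$ has positive probability by the support theorem for the $Q$-Wiener measure; independence of $\cA_1$ and $\cA_2$ follows from the independent increments of $W_t$.

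On $\cA$, continuity of the solution map for~\eqref{spde_intro} (based at the fixed point $0$ of the deterministic equation) gives $\sup_{t \in [0,T]} \|u(t)\|_H < \epsilon_3$ for some $\epsilon_3 = \epsilon_3(\epsilon_1, \epsilon_2, T, \alpha) \to 0$ as $\epsilon_1, \epsilon_2 \to 0$. In this regime, $A(t) = (\Delta + \alpha) + B(t)$ with $\|B(t)\|_{H \to H} = O(\epsilon_3^2)$ uniformly on $[0, T]$. With $E_0 = \operatorname{span}(e_1, \ldots, e_k)$, a perturbation argument combining a Gr\"onwall estimate with the spectral gap $\lambda_{k+1} - \lambda_k > 0$ keeps $\Phi^t_\omega E_0$ close to $E_0$ in the Grassmannian on $[0,T]$, yielding
\begin{equation*}
\operatorname{tr}\bigl(P_{\Phi^t_\omega E_0}\, A(t)\, P_{\Phi^t_\omega E_0}\bigr) \geq \sum_{i=1}^k (\alpha - \lambda_i) - C \epsilon_3^2
\end{equation*}
for some $C = C(\alpha, T, k)$ and all $t \in [0,T]$. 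Choosing $\epsilon_1, \epsilon_2$ small enough that $C\epsilon_3^2 < \delta$ and integrating in time delivers the lower bound $V_k(t;\omega) \geq \sum_{i=1}^k(\alpha-\lambda_i) - \delta$ on $\cA$.

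The main technical obstacle I anticipate is the quantitative support estimate for $\cA_1$: one must show that the pullback from the infinite past lands in a small $H$-ball around $0$ with positive probability, which likely requires constructing an explicit Cameron--Martin control driving the SPDE to $0$ and then invoking continuity of the It\^o-to-solution map in the noise. A secondary subtlety is that a naive operator-norm Duhamel estimate for the drift of $\Phi^t_\omega E_0$ off $E_0$ could deteriorate as $T$ grows relative to $(\lambda_{k+1} - \lambda_k)^{-1}$; this should be mitigated by exploiting the spectral gap of $A(t)$ itself, which survives the $O(\epsilon_3^2)$ perturbation when $u(t)$ is small.
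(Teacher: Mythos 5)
Your upper bound argument in part (a) is essentially equivalent to the paper's: the Liouville identity $\frac{\rmd}{\rmd t}\log|\det(\Phi^t_\omega|_E)| = \operatorname{tr}(P_{\Phi^t_\omega E}A(t)P_{\Phi^t_\omega E})$ combined with Ky Fan and the operator order $A(t)\leq \Delta+\alpha$ is a fine reformulation of the paper's wedge-product energy estimate together with their lemma that $\hat B^{(k)}$ is negative semi-definite when $B\leq 0$. Both routes are correct.

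Part (b), however, has a genuine gap that the paper takes pains to address: you claim that $\sup_{t\in[0,T]}\|u(t)\|_H<\epsilon_3$ yields $\|B(t)\|_{H\to H}=O(\epsilon_3^2)$ with $B(t)=-3u(t)^2$. This is false: the multiplication operator $v\mapsto u^2 v$ on $L^2(\cO)$ has operator norm $\|u\|^2_{L^\infty(\cO)}$, which smallness in $L^2$ does not control. The paper explicitly flags this as a central difficulty of the infinite-dimensional setting and resolves it by proving (Proposition \ref{h1:attractor}) that on a positive-probability event the point attractor stays small in $V=H^1_0(\cO)$ (hence in $L^\infty$ by the 1D Sobolev embedding) on $[0,T]$; this requires the covariance regularity hypothesis \eqref{trace}, parabolic smoothing of the semigroup $e^{(\Delta+\alpha)t}$ from $H$ to $V$, and the singular Gronwall lemma. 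Your event $\cA_1\cap\cA_2$ only tracks $H$-norms of $a(\omega)$ and of the noise increments, so the estimate on $B(t)$ does not follow. Without upgrading your smallness event to the $V$-norm (and justifying that the stationary measure gives positive mass to small $V$-balls, which the paper gets from Gaussianity on $H^1_0$ per~\cite{DaPratoDebusscheGoldys}), the proof does not close. A secondary, smaller concern, which you yourself flag, is the potential $T$-dependence in the Grassmannian Gr\"onwall/Duhamel step; the paper avoids this entirely by working with the explicit quadratic forms $Q^{(k)}_\delta$ on $\wedge^k H$, which give exact cone invariance and a $T$-uniform differential inequality, so that ingredient is a real simplification relative to your sketch even if your approach could in principle be pushed through using the attracting fixed point of the induced Grassmannian flow.
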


	%
	\subsubsection*{Remarks and comments on the proof}
	
	Estimation of FTLE for the stochastic ODE \eqref{eq:SODE} is straightforward and proceeds roughly as follows.  Analogously to our setting, FTLE are unconditionally $\leq \alpha$ for any value of $\alpha \in \mathbb R$. 
	When $\alpha > 0$, the origin $0$ is linearly unstable, yet
	one can find a positive probability event that the point attractor remains close to 0 on arbitrarily long timescales, accumulating a positive FTLE $\approx \alpha$. 
	
	Our proof in the SPDE setting \eqref{spde_intro} follows roughly the same lines: the events $\cA$ we
	construct in Theorems \ref{thm:topLEEst} and \ref{thm:volumeFTLE} steer the random point 
	attractor $a_\alpha(\omega)$ towards $0$, where the linearization is ``close'' to the 
	shifted heat semigroup $e^{(\alpha + \Delta) t}$ with singular values $e^{t (\alpha - \lambda_i)}, i \geq 1$. However, making this rigorous entails several 
	challenges not present in the finite-dimensional case, e.g.:
	\begin{enumerate}[(a)]
		\item Due to the nonlinear term in \eqref{spde_intro}, the linearization
		is only close to the semigroup $e^{(\alpha + \Delta) t}$
		when the point attractor $a_\alpha(\omega)$ is small in $\cC(\cO)$, not just $H = L^2(\cO)$ (c.f. Proposition \ref{prop:FTLE_1}). 
		For details, see Proposition \ref{h1:attractor} and its proof in the Appendix. 
		\item Even when the linearization is steered to be close to the semigroup $e^{(\alpha + \Delta) t}$, 
		lower bounds on FTLE do not follow from naive $L^2$ energy estimates. Instead, we derive a lower bound
		using a careful invariant cones argument delineated by an appropriate quadratic form $Q_\delta$
		on $H \times H$. This approach, inspired by techniques for ODEs \cite{lewowicz1980lyapunov},  avoids verifying the abstract criteria developed in~\cite{Amann_1995} in order to establish cone invariance for parabolic evolution operators. The lower bounds on $k$-dimensional volume growth are obtained by extending this technique to the wedge spaces $\Lambda^k H$. See  Section \ref{subsubsec:wedges} for details. 
		%
	\end{enumerate}
	
	
	

	\begin{remark}
		The collection of Lyapunov exponents usually refers to the set of asymptotic exponential growth rates
		realized by different tangent directions. By the Multiplicative Ergodic Theorem (see, e.g., \cite{oseledets1968multiplicative, ruelle1982characteristic} or the survey \cite{young2013mathematical}), the rates achieved are precisely asymptotic exponential growth rates of singular values, and so a natural way to consider `lower' \emph{finite-time Lyapunov exponents} is to use finite-time singular values: 
		\[
		\Lambda_k(t; \omega, u_0) := \frac1t \log \sigma_k(D_{u_0} \varphi^t_\omega)
		\]
		where for $k \geq 1$ we write $\sigma_k(A)$ for the $k$-th singular value of a bounded linear operator $A$ on $H$. It is straightforward to show that for such a bounded operator $A$, we have the identity
		$\| \wedge^k A\|_{\wedge^k H} = \prod_{i = 1}^k \sigma_i(A)$, hence
		\[
		V_k(t; \omega) = \sum_{i =1 }^k \Lambda_i(t; \omega) \, .
		\]
		For $T > 0, 0 < \delta \ll 1$, Theorem \ref{thm:volumeFTLE} implies there is a positive probability event $\cA \subset \Omega$ so that
		\[
		\alpha - \lambda_k - \delta \leq \Lambda_k(t; \omega) \leq \alpha - \lambda_k + \delta
		\quad \text{ for all } \omega \in \cA, t \in [0,T] \, ,
		\]
		suggesting as before a bifurcation occurring as $\alpha$ moves past $\lambda_k$. 
	\end{remark}

\begin{remark}
		Our main results concern estimates of \emph{finite-time} Lyapunov exponents on positive probability events, i.e.,
		fixing a time $T > 0$ and estimating $\Lambda_k(t, \omega)$ for $t \in [0,T]$ and for $\omega$
		drawn from a positive-probability set in $\Omega$. 
		As $t \to \infty$, it follows from the subadditive ergodic theorem 
		and uniqueness of the stationary measure for the Markov process defined by \eqref{spde_intro} 
		that the \emph{asymptotic Lyapunov exponents}
		\begin{align} \label{eq:asymptExp}
			\lim_{t \to \infty} \Lambda_k(t; \omega)
		\end{align}
		exist for each $k \geq 1$ and are deterministic (independent of $\omega$) with probability 1 (see, e.g., \cite{ruelle1982characteristic}). 
		Since the random attractor of \eqref{spde_intro} consists of a single point with full 
		probability for all values of the bifurcation parameter, it is likely that
		 the asymptotic exponents as in \eqref{eq:asymptExp} are all negative (or at least nonpositive). 
		Indeed, in \cite{CrauelFlandoli} the asymptotic Lyapunov exponent of the one-dimensional system \eqref{eq:SODE} was shown to be negative, using an explicit calculation, and in \cite{DoanEngeletal} a quantitative negative upper bound was derived for the two-dimensional extension to Hopf bifurcations in certain parameter regimes. Perhaps unsurprisingly, though, the arguments from these papers do not carry over to the infinite dimensional setting of our work. Providing such an estimate remains an open problem for future work. 
	\end{remark}

\begin{remark}\label{rmk:smallInitialData}
Our estimates of FTLEs along the random attractor, carried out in Section \ref{sec:proofs}, are relatively versatile and make no direct use of assumptions about the nondegeneracy of the covariance (as in \cite{Caraballoetal}), otherwise used to ensure uniqueness of stationary measures or synchronization by noise-- see Remark \ref{rmk:useOfCovariance} for further discussion. Moreover, it is not hard to check that our estimates carry over to trajectories initiated at any sufficiently small initial $u_0 \in H$. However, our proof as-is does not extend to arbitrary initial data: see Section \ref{sec:outlook} at the end of the paper for further discussion along these lines. 
\end{remark}


	
	\subsubsection*{Structure of the paper}
	
	Section~\ref{sec:prelim} recalls regularity properties of the solution to equation~\eqref{spde_intro} and the generation of an associated random dynamical system with random attractor and sample measures corresponding with the unique stationary measure of~\eqref{spde_intro}.
	Section~\ref{sec:proofs} is dedicated to proving Theorems \ref{thm:topLEEst} and \ref{thm:volumeFTLE}, firstly taking care of estimating the top FTLE with the cone technique described above, and secondly bounding the $k$-volume growth rates via the wedge spaces $\wedge^k H$. In the Appendix we provide the proof of the crucial Proposition~\ref{h1:attractor}, yielding control of the random equilibrium in $V_\gamma = H^{2\gamma}_0(\cO)$ for $\gamma\in(\frac{1}{4},\frac{1}{2})$. For an outlook on future
	problems along the lines of the results in this paper, see Section \ref{sec:outlook}.
	

	\section{Preliminaries} \label{sec:prelim}

	We study bifurcations for the following reaction diffusion equation with additive noise. Let  $\alpha>0$, set $H:=L^2(\cO)$ and $V:=H^1_0(\cO)$ for a bounded domain $\cO\in\mathbb{R}$, e.g. $\cO=[0,L]$, and consider again the SPDE~\eqref{spde_intro}
	\begin{align*}
	\begin{cases}
	\txtd u = (\Delta u + \alpha u - u^3) ~\txtd t + \txtd W_t, \\
	u(0)=u_{0}\in H, \quad u|_{\partial \cO} =0.
	\end{cases}
	\end{align*}
	\renewcommand{\cL}{\Delta}
	
	\subsection{Background and basic properties}\label{assumptions}


	{\bf Notations, assumptions, and basic results.} 
	 Throughout, we regard the Laplacian $\Delta$ as a closed linear operator on $H = L^2(\cO)$ with Dirichlet boundary conditions. It is well-known that this generates a compact, analytic $C_0$-semigroup $(S(t))_{t\geq 0}$ on $H$ and the domain of its fractional powers can be identified with fractional Sobolev spaces~\cite{Amann_1995}. 
	  More precisely we introduce the spaces $V_\gamma:= (D(-\cL)^\gamma, \langle \cdot, \cdot\rangle_{V_\gamma})$, where $\langle x ,y\rangle_{V_\gamma} =\langle (-\cL)^\gamma x, (-\cL)^\gamma y \rangle$ for $x,y\in V_\gamma$. Then we can identify the fractional power spaces with Sobolev spaces~\cite[Theorem 16.15]{Yagi}
	\begin{equation*}
	V_\gamma = \begin{cases} H^{2\gamma}(\cO), & \gamma \in [0,\frac{1}{4}),\\
	H^{2\gamma}_0(\cO), & \gamma\in(\frac{1}{4},1]\setminus\{\frac{3}{4}\}.
		\end{cases}
	\end{equation*} 
	Note that in particular, 
	$
	V_{\frac{1}{2}}=H^1_0(\cO)=D((-\cL)^{1/2}) \,
	$
	and that $V_\gamma$ is continuously embedded in 
	$\cC( \cO)$
	for all 
	$\gamma > \frac14$ by the Sobolev embedding theorem~\cite[Theorem 1.36]{Yagi}.

In this paper, we will utilize a two-sided $H$-cylindrical Wiener process $(W_t)_{t\in\bR}$ with covariance operator $Q$ (to be specified shortly)
	on a probability space $(\Omega, \cF, \bP)$. It is well-known that there exists a Hilbert space $\tilde{H}$ such that $H\subset \tilde{H}$ where $(W_t)_{t\in\mathbb{R}}$ is trace-class. Since we are working in a random dynamcial systems framework, it is expedient to use the canonical space $\Omega := C_0(\mathbb R, \tilde{H})$ with the compact-open toplogy, $\mathcal F = \operatorname{Bor}(\Omega)$, and Wiener measure $\mathbb P$, so that $\mathbb P$-typical $\omega \in \Omega$ correspond to two-sided Brownian paths $\omega : \mathbb R \to \tilde{H}$ with $\omega(0) = 0$. We abuse notation somewhat and will write $W_t = W_t(\omega) = \omega(t)$ in the following. The space $(\Omega, \mathcal F, \mathbb P)$ will be equipped with the two-sided filtration $(\mathcal F_s^t)_{s < t}, \mathcal F_s^t := \omega(W_t - W_s)$, as well as the time-shift $\theta_t : \Omega \circlearrowleft$ given by $\theta^t(\omega)(s) := \omega(t + s)$, so that $(\Omega, \mathcal F, \mathbb P, (\theta_t)_{t \in \mathbb R})$ is an ergodic measure-preserving transformation (see, e.g., \cite{ChueshovScheutzow}). 

	We will assume in what follows that $Q$ is given by 
	\begin{align} \label{eq:defineQ} Q:=(-\Delta)^{-2\beta} \, , \end{align}
	where $\beta \in (0,\frac{1}{4})$. While other choices are possible to make the forthcoming arguments work, this choice is made for ease of exposition. See Remark \ref{rmk:useOfCovariance} below for further discussion on precisely what is needed regarding the covariance $Q$.

The following is used to ensure propagation of $V_\gamma$-regularity, a crucial ingredient of the proofs in Section \ref{sec:proofs}. 
	\begin{lemma}\label{lem:stochConvolution}
		Let $\gamma \in (\frac14, \frac14 + \beta)$. With $Q$ as above, there exists an almost-sure, $\mathcal F_0^t$-adapted modification $(z_t)_{t \geq 0}$ of the stochastic convolution 
		\begin{align}
			\int_0^t S(t - s) \sqrt{Q} dW_s
		\end{align}
		for which $t \mapsto z_t$ is continuous with values in $V_\gamma$ for $t \geq 0$.
	\end{lemma}
	\begin{proof}[Proof sketch]
		This follows from the same argument as that of \cite[Proposition 3.1]{ChueshovScheutzow}, using only the assumption that
		\begin{equation}\label{trace}
			\text{tr}_H(Q(-\cL)^{2\gamma-1+\varepsilon})<\infty,
		\end{equation}
		for some $\varepsilon > 0$, which is equivalent to the upper bound $\gamma < \beta + \frac14$. Here, $\text{tr}_H$ refers to the standard trace of an operator on $H$. 
	\end{proof}

	\subsubsection*{Well-posedness, $C^1$ semiflow and RDS formulation}
	 Let $\gamma \in (\frac14, \frac14 + \beta)$ be fixed for now. 
	By standard techniques, it now holds (see, e.g., \cite{DaPratoZabczyk}) that
	\begin{itemize}
		\item [1)] there exists $\bP$-a.s.~a unique mild solution of~\eqref{spde_intro} 
		\[
		u\in L^2(\Omega\times(0,T);V_\gamma) \cap 
		L^2(\Omega; C([0,T]; H))
		\]
		for all $T > 0$; and  
		\item [2)] the first variation equation along the trajectory $(u_t)$, given by 
		\begin{align}\label{variational:eq}
		\txtd v = (\Delta v + \alpha v - 3 u^2 v) \txtd t 
		\end{align}
		is well-posed $\bP$-almost surely and for arbitrary initial data $v\in H$. 
	\end{itemize}

Below, we collect various properties that allow to realize mild solutions of \eqref{spde_intro} as the trajectories of a random $C^1$ semiflow $\varphi^t_\omega :  H \to H$.
	 The first three statements are well-known (\cite{CaraballoLangaRobinson}) and the Fr\'echet differentiablity follows from~\cite[Lemma 4.4]{Debussche}.
	\begin{proposition} \label{prop:RDS}
		There is a $\theta^t$-invariant subset $\Omega' \subset \Omega$ of full probability\footnote{In what follows, we will intentionally abuse notation and conflate $\Omega$ and $\Omega'$. 
		} such that for all $\omega \in \Omega'$ and any $t \geq 0$, 
		there is a (Fr\'echet differentiable) $C^1$ semiflow 
		\[
		u_0 \mapsto \varphi(t, \omega, u_0) =: \varphi^t_\omega(u_0)
		\]
		on $H = L^2(\cO)$ with the following properties for each $\omega \in \Omega'$: 
		\begin{itemize}
			\item[(a)] For all $T > 0$ and fixed initial $u_0 \in H$, the mapping $\Omega \times [0,T] \mapsto H$ given by $(\omega, t) \mapsto \varphi^t_\omega(u_0)$ is the (unique) pathwise mild solution to \eqref{spde_intro}. 
			\item[(b)] The semiflow $\varphi$ satisfies the \emph{cocycle property}: for $s, t > 0$ we have
			\[
			\varphi^{t + s}_\omega = \varphi^t_{\theta^s \omega} \circ \varphi^s_\omega
			\]
			\item[(c)] For any $u \in H$ and $s, t \in \bR, s < t$, we have that
			$\varphi^{t-s}_{\theta^s \omega}(u)$ is $\cF_s^t$-measurable as an $H$-valued random variable. 
			\item[(d)] For all $T > 0$ and fixed initial conditions $u_0, v_0 \in H$, the mapping
			$\Omega \times [0,T] \mapsto H$ given by $(\omega, t) \mapsto D_{u_0} \varphi^t_\omega (v_0)$ is the unique solution to the first variation equation \eqref{variational:eq}. 
		\end{itemize}
	\end{proposition}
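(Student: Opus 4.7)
The plan is to reduce \eqref{spde_intro} to a pathwise random PDE via an Ornstein--Uhlenbeck substitution and then apply classical semilinear parabolic theory sample-by-sample. First, let $z(\omega)$ be the stationary mild solution of the linear equation $\rmd z = \Delta z\,\rmd t + \rmd W_t$; the trace condition \eqref{trace} with $s = 1/2$ yields $z \in L^2(\Omega; C([0,T]; V))$ and, by a Kolmogorov-continuity argument, a $\theta^t$-invariant full-measure set $\Omega' \subset \Omega$ on which $t \mapsto z(\theta^t\omega)$ is continuous in $V$, hence in $L^\infty(\cO)$ by the one-dimensional Sobolev embedding $V \hookrightarrow L^\infty$. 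Writing $u = v + z(\theta^\cdot\omega)$ transforms \eqref{spde_intro} into the pathwise random PDE
$$\partial_t v = \Delta v + \alpha(v + z(\theta^t\omega)) - (v + z(\theta^t\omega))^3, \qquad v(0) = u_0 - z(\omega),$$
a semilinear parabolic equation with continuous bounded forcing. Cubic dissipativity combined with pathwise $L^\infty$-continuity of $z(\theta^\cdot\omega)$ on compact intervals yields, via truncation, Picard iteration and standard a priori estimates, a unique global mild solution $v \in C([0,T]; H) \cap L^2(0,T; V)$ depending continuously on $u_0 \in H$. Setting $\varphi^t_\omega(u_0) := v(t) + z(\theta^t\omega)$ then defines the pathwise mild solution of \eqref{spde_intro}, proving (a).

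Part (b) is immediate from the shift identity $z(\theta^{s+t}\omega) = z(\theta^t(\theta^s\omega))$ together with uniqueness for the transformed PDE. For (c), I would bypass the OU trick and work directly with the mild formulation driven by the shifted noise,
$$\varphi^{r-s}_{\theta^s\omega}(u) = e^{(r-s)\Delta} u + \int_s^r e^{(r-\rho)\Delta}\bigl(\alpha \varphi^{\rho - s}_{\theta^s\omega}(u) - (\varphi^{\rho - s}_{\theta^s\omega}(u))^3\bigr)\,\rmd \rho + \int_s^r e^{(r-\rho)\Delta}\,\rmd W_\rho,$$
which depends on $\omega$ only through the stochastic convolution, and hence only through the Wiener increments on $[s,t]$; Picard iteration identifies $\varphi^{t-s}_{\theta^s\omega}(u)$ as a measurable functional of these increments and $u$, giving $\cF_s^t$-measurability.

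For (d), I would follow the strategy of \cite[Lemma 4.4]{Debussche}. Formally differentiating the mild formulation for $\varphi^t_\omega(u_0)$ in $u_0$ produces
$$D_{u_0}\varphi^t_\omega(v_0) = e^{t\Delta} v_0 + \int_0^t e^{(t-r)\Delta}\bigl(\alpha - 3\,\varphi^r_\omega(u_0)^2\bigr)\,D_{u_0}\varphi^r_\omega(v_0)\,\rmd r,$$
which is the mild form of \eqref{variational:eq}. Well-posedness of this linear equation in $C([0,T];H)$ requires $\varphi^r_\omega(u_0) \in L^\infty(\cO)$ for $r > 0$; this holds because $v(r) \in V$ by parabolic smoothing of the random PDE and $z(\theta^r\omega) \in V \hookrightarrow L^\infty$ on $\Omega'$. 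A standard difference estimate on $\varphi^t_\omega(u_0 + h) - \varphi^t_\omega(u_0) - D_{u_0}\varphi^t_\omega(h)$ combined with Gronwall then identifies the solution of this linear equation as the Fr\'echet derivative. The main obstacle throughout is precisely this pathwise $L^\infty$-control of $u$ needed to treat $u^2$ as a bounded multiplier on $H$, which is what makes the one-dimensional setting and the trace regularity \eqref{trace} with $s = 1/2$ essential.
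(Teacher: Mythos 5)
The paper does not supply an independent proof of this proposition: it is stated with citations to \cite{CaraballoLangaRobinson} for parts (a)--(c) and to \cite[Lemma 4.4]{Debussche} for the Fr\'echet differentiability in (d). Your sketch (Ornstein--Uhlenbeck subtraction to obtain a pathwise random PDE, global well-posedness via truncation and Picard iteration using $V = H^1_0 \hookrightarrow L^\infty$ in one dimension, the cocycle and measurability properties from the mild formulation, and linearization in the sense of Debussche) reconstructs exactly the argument those references carry out, so your approach is essentially the same as the one the paper is invoking. The only place I would press harder is part (d): for $u_0 \in H$ the parabolic-smoothing bound gives $\|\varphi^r_\omega(u_0)\|_{V}\lesssim r^{-1/2}$, hence $\|\varphi^r_\omega(u_0)^2\|_{L^\infty}\lesssim r^{-1}$ near $r=0$, which is not integrable, so one cannot directly treat $-3\varphi^r_\omega(u_0)^2$ as a time-integrable bounded multiplier on $H$ in the Duhamel formula; the standard fix (and what Debussche does) is to trade off the singularity against the extra smoothing of $e^{(t-r)\Delta}$ between appropriate spaces and to close the contraction with a singular Gronwall estimate, precisely of the type recorded in the paper's Lemma~\ref{gr}. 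You flag this $L^\infty$-control issue as the main obstacle, so you are aware of it, but the rate of the blowup and how the semigroup absorbs it should be stated explicitly to make the fixed-point/Gronwall step in (d) airtight.
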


 Here, given a $C^1$ Fr\'echet-differentiable 
	mapping $\psi : H \to H$, we write $D_u \psi \in L(H)$ for the derivative of $\psi$ evaluated at $u \in H$.


	\subsubsection*{Markov process formulation}
	
	For fixed initial $u_0 \in H$ and for  $\omega \in \Omega$, 
	we will write $(u_t)_{t \geq 0}$ for the random process in $H$ defined by 
	$u_t := \varphi^t_\omega (u_0)$. We note two important properties of this process.
	\begin{lemma}
		The process $(u_t)$ is an $\cF^t_0$-adapted, Feller Markov process. 
	\end{lemma}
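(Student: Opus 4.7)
The plan is to verify the three assertions separately: adaptedness, the Markov property, and the Feller property, each of which follows essentially from Proposition \ref{prop:RDS} combined with the independent-increment structure of the Wiener shift $\theta^t$.

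For \emph{adaptedness}, I would simply specialize Proposition~\ref{prop:RDS}(c) to $s = 0$: this says that $\varphi^t_\omega(u_0) = \varphi^{t-0}_{\theta^0\omega}(u_0)$ is $\cF^t_0$-measurable as an $H$-valued random variable, so $u_t$ is $\cF^t_0$-adapted.

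For the \emph{Markov property}, the key ingredient is the cocycle identity from Proposition~\ref{prop:RDS}(b). For $0 \le s < t$ and a bounded Borel $f : H \to \bR$ one writes
\[
f(u_t) \;=\; f\bigl(\varphi^{t-s}_{\theta^s\omega}(u_s)\bigr).
\]
Here $u_s$ is $\cF^s_0$-measurable by the previous step, while $\varphi^{t-s}_{\theta^s\omega}(\cdot)$ depends measurably only on the increments of $\omega$ on $[s,t]$ and is hence $\cF^t_s$-measurable by Proposition~\ref{prop:RDS}(c). Since the Wiener shift produces independent increments, $\cF^t_s$ is independent of $\cF^s_0$, so conditioning on $\cF^s_0$ yields
\[
\bE\bigl[f(u_t)\,\big|\,\cF^s_0\bigr] \;=\; \bigl(P_{t-s}f\bigr)(u_s),\qquad P_r f(x) \;:=\; \bE\bigl[f(\varphi^r_\omega(x))\bigr],
\]
which is the Markov property.

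For the \emph{Feller property}, I would show that $P_t : C_b(H) \to C_b(H)$. Boundedness is trivial since $|P_tf| \le \|f\|_\infty$. For continuity, fix $t \ge 0$, $f \in C_b(H)$ and a sequence $u_n \to u_0$ in $H$. Proposition~\ref{prop:RDS} guarantees that for each fixed $\omega \in \Omega'$ the map $u \mapsto \varphi^t_\omega(u)$ is $C^1$, hence in particular continuous, so $\varphi^t_\omega(u_n) \to \varphi^t_\omega(u_0)$ in $H$ and therefore $f(\varphi^t_\omega(u_n)) \to f(\varphi^t_\omega(u_0))$ pointwise in $\omega$. Since $|f(\varphi^t_\omega(u_n))| \le \|f\|_\infty$, dominated convergence gives $P_t f(u_n) \to P_t f(u_0)$. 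Thus $P_t f \in C_b(H)$.

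The only mildly delicate point is confirming that the cocycle version $\varphi^{t-s}_{\theta^s\omega}(\cdot)$ is $\cF^t_s$-measurable \emph{jointly} in the initial condition and in $\omega$; this is exactly Proposition~\ref{prop:RDS}(c), so no additional work is needed beyond what has already been stated. Given that, all three properties drop out and the lemma is proved.
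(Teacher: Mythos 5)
Your proof is correct and follows exactly the same route as the paper's (one-line) argument: adaptedness from Proposition~\ref{prop:RDS}(c) with $s=0$, the Markov property from the cocycle identity together with independence of Wiener increments, and the Feller property from continuity of $u_0 \mapsto \varphi^t_\omega(u_0)$ plus dominated convergence. The paper simply states these conclusions without spelling out the details; you have filled in the standard verification.
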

	\begin{proof}
		That $(u_t)$ is $\cF^t_0$-adapted follows from its definition and Proposition~\ref{prop:RDS}(c). The fact that it is a Feller Markov process follows from continuity of $u_0 \mapsto \varphi^t_\omega(u_0)$ for almost all $\omega \in \Omega'$.
	\end{proof}
	
	Furthermore, we obtain the following statement regarding the existence, uniqueness and regularity of the invariant measure associated to~\eqref{spde_intro}.

	\begin{proposition} \label{prop:statmeas}
		The process $(u_t)$ admits a unique, locally positive stationary measure $\rho$ on $H$ for which $\rho(V_\gamma) = 1$. 
	\end{proposition}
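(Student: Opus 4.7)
The plan is to deploy the standard scheme for unique ergodicity of dissipative semilinear SPDEs with non-degenerate additive noise, following the framework of Da Prato--Debussche--Goldys. Existence is the easiest piece: by testing \eqref{spde_intro} against $u$ and exploiting the sign of $-u^4$ to absorb the $\alpha u^2$ term, one obtains uniform-in-$t$ bounds on $\bE \|u_t\|_H^2$ (and, after a moment of parabolic regularization powered by the trace condition \eqref{trace} with $s=1/2$, on $\bE \|u_t\|_V^2$). Because the embedding $V \hookrightarrow H$ is compact, the Cesàro averages of the laws of $(u_t)$ starting from any $u_0 \in H$ are tight on $H$, and Krylov--Bogoliubov produces a stationary measure $\rho$.

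For uniqueness and local positivity, the plan is to invoke Doob's theorem via strong Feller plus irreducibility. The strong Feller property of the Markov semigroup $P_t f(u_0) = \bE f(\varphi^t_\omega(u_0))$ is obtained by a Bismut--Elworthy--Li representation of $D P_t f$, where the key inputs are (i) invertibility of $Q^{1/2}$ on the relevant subspaces, guaranteed by \eqref{trace}, and (ii) pathwise moment bounds on the variational equation \eqref{variational:eq} with cubic potential $-3u^2$, which are dissipative in $H$. Irreducibility, namely $P_t(u_0, U) > 0$ for every open $U \subset H$ and every $u_0 \in H$, follows from a controllability argument: using the density of the Cameron--Martin space of $W$ one constructs a control $h$ such that the deterministically driven equation $\partial_t \tilde u = \Delta \tilde u + \alpha \tilde u - \tilde u^3 + \dot h$ steers $u_0$ into $U$ in time $t$, and a support theorem then transfers this positivity to the noise-driven dynamics. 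Doob's theorem combines strong Feller with irreducibility to give uniqueness of $\rho$, and local positivity is then immediate: for nonempty open $U \subset H$, pick $u_0 \in U$; by irreducibility $P_t(u_0, U) > 0$, by strong Feller this persists on a neighborhood of $u_0$, and invariance yields $\rho(U) > 0$.

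The hardest step, and the one where I would lean most heavily on the cited reference, is verifying the strong Feller property in a form robust against the cubic nonlinearity. The variational equation \eqref{variational:eq} is dissipative but has an unbounded, $u$-dependent zero-order term, so estimates on $D_{u_0} \varphi^t_\omega$ needed in the Bismut--Elworthy--Li formula have to be tracked carefully as functions of $\|u_0\|_H$ (and likely higher norms) uniformly on short time intervals. Since exactly this analysis is performed in Section 6 of Da Prato--Debussche--Goldys for reaction-diffusion equations with polynomial drifts that subsume \eqref{spde_intro}, I would simply cite those estimates rather than reproduce them, and assemble the existence/strong Feller/irreducibility pieces into the proposition as above.
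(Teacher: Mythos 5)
Your proposal matches the paper's approach: the paper does not prove Proposition~\ref{prop:statmeas} at all but simply cites Section~6 of Da Prato--Debussche--Goldys, and your proposal reconstructs the internal structure of that reference (Krylov--Bogoliubov for existence; Bismut--Elworthy--Li for strong Feller; controllability plus a support theorem for irreducibility; Doob's theorem for uniqueness and local positivity) before itself deferring to the same citation for the hard estimates.

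One small slip worth flagging: you assert that "invertibility of $Q^{1/2}$ on the relevant subspaces" is "guaranteed by \eqref{trace}." That is not correct as stated. Condition \eqref{trace} is a trace-class/upper bound on $Q$ (it makes $Q$ compact, hence $Q^{-1/2}$ unbounded) and provides no non-degeneracy from below. The non-degeneracy needed for the Bismut--Elworthy--Li formula -- typically that $Q^{1/2}$ has dense range and that the linearized flow $D_{u_0}\varphi^t_\omega$ maps, after a smoothing time, into $\mathrm{Range}(Q^{1/2})$ with controllable $Q^{-1/2}$-norm -- is a separate structural hypothesis that the paper inherits tacitly from the cited reference rather than from \eqref{trace}. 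The rest of the outline is sound and faithful to the standard scheme the authors invoke.
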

	

	\begin{proof}
	The existence of an invariant measure is an application of~\cite[Theorem 4.4]{stannat}, whereas its uniqueness can be inferred from~\cite[Section 5]{cerrai:ergodicity}. The hypothesis in~\cite{cerrai:ergodicity} can be verified in our setting, due to the structure of the covariance operator of the noise $Q=(-\Delta)^{-2\beta}$ for $\beta\in(0,\frac{1}{4})
$.	More precisely letting $(\lambda_k)_{k\in\mathbb{N}}$ denote the eigenvalues of the Dirichlet-Laplacian and $(q_k)_{k\in\mathbb{N}}$ stand for the eigenvalues of $Q$, the condition
\[ \sum\limits_{k=1}^\infty \frac{q^2_k}{\lambda^{1-p}_k}<\infty  \]
holds for some $p\in(0,1)$. This reduces to \[ \sum\limits_{k=1}^\infty \frac{1}{k^{4\beta+2-2\gamma}}<\infty,\]
which is satisfied provided that $\beta>\frac{p}{2}-\frac{1}{4}$. Therefore it is always possible to find $p\in(0,1)$ satisfying the above inequality. 
 That the resulting invariant measure fully charges $V_\gamma$ follows by construction relying on the Krylov-Bogoliubov method~\cite[Section 2]{stannat} and regarding the compact embeddings $V_\gamma\hookrightarrow V_{\gamma'}$  for $\gamma'\leq \gamma $. 
 
 Lastly, this measure is locally positive on $H$ by \cite[Proposition 8.3.6]{cerrai:ergodicity}. Since $\rho(V_\gamma) = 1$, it is straightforward to check that local positivity on $V_\gamma$ also holds. 
	\end{proof}

	
	\subsection{Properties of attractor and sample measures}
	
	Given a Borel-measurable mapping $\psi : H \to H$ and a Borel probability 
	$\mu$ on $H$, define
	$\psi_* \mu := \mu \circ \psi^{-1}$ to be the pushforward of $\mu$ by $\psi$. 
	We recall a, by now, classical result from the RDS literature, sometimes also coined the \emph{correspondence theorem} between stationary measures $\rho$ and sample measures $\rho_{\omega}$ that are measurable with respect to the past, also named \emph{Markov measures}.
	\begin{lemma}[Theorem 4.2.9 of \cite{KuksinShiri}] \label{lem:samples}
		For $\bP$ a.e. $\omega \in \Omega'$, the weak$^*$ limit
		\[
		\rho_\omega = \lim_{t \to \infty} (\varphi^t_{\theta^{-t} \omega})_* \rho
		\]
		exists and is $\cF_- := \cF_{-\infty}^0$-measurable. The sample measures
		$\rho_\omega$ satisfy $(\varphi^t_\omega)_* \rho_\omega = \rho_{\theta^t \omega}$ with probability 1 for all $t \geq 0$ as well as $\bE(\rho_\omega) = \rho$. 
	\end{lemma}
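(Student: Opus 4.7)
The plan is to follow the classical reverse-time martingale argument (Crauel, Le~Jan). Write $\rho^t_\omega := (\varphi^t_{\theta^{-t}\omega})_* \rho$; by Proposition~\ref{prop:RDS}(c), $\rho^t_\omega$ is $\cF_{-t}^0$-measurable, and the filtration $(\cF_{-t}^0)_{t \geq 0}$ is increasing in $t$ with limit $\cF_{-\infty}^0 = \cF_-$.

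The heart of the argument is to show that for every $f \in C_b(H)$, the bounded scalar process $M^f_t(\omega) := \int_H f \, d\rho^t_\omega$ is an $(\cF_{-t}^0)_{t \geq 0}$-martingale. For $0 \leq s \leq t$, the cocycle property (Proposition~\ref{prop:RDS}(b)) gives $\varphi^t_{\theta^{-t}\omega} = \varphi^s_{\theta^{-s}\omega} \circ \varphi^{t-s}_{\theta^{-t}\omega}$, so
\[
M^f_t(\omega) \;=\; \int (f \circ \varphi^s_{\theta^{-s}\omega})\bigl(\varphi^{t-s}_{\theta^{-t}\omega}(u)\bigr) \, \rho(du).
\]
The outer map $f \circ \varphi^s_{\theta^{-s}\omega}$ is $\cF_{-s}^0$-measurable, while for each fixed $u$ the random variable $\varphi^{t-s}_{\theta^{-t}\omega}(u)$ is $\cF_{-t}^{-s}$-measurable (hence independent of $\cF_{-s}^0$) with distribution $P_{t-s}(u, \cdot)$, by shift-invariance of Wiener measure. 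Pulling out the $\cF_{-s}^0$-measurable factor, applying Fubini, and using stationarity $\int P_{t-s}(u, \cdot) \rho(du) = \rho$ (Proposition~\ref{prop:statmeas}), one obtains
\[
\bE[M^f_t \mid \cF_{-s}^0] \;=\; \int (f \circ \varphi^s_{\theta^{-s}\omega})(v) \, \rho(dv) \;=\; M^f_s(\omega).
\]

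With the martingale property in hand, Doob's theorem gives a $\bP$-a.s.\ (and $L^1$) limit $M^f_\infty(\omega)$, measurable with respect to $\cF_-$. To upgrade this scalar convergence to weak$^*$ convergence of the measures $\rho^t_\omega$ themselves, I would fix a countable collection $\mathcal{D} \subset C_b(H)$ that separates Borel probability measures on the Polish space $H$, and intersect the corresponding full-measure sets to obtain simultaneous convergence of $M^f_t(\omega)$ for all $f \in \mathcal{D}$. Tightness of $\{\rho^t_\omega\}_{t \geq 0}$ at a.e.~$\omega$ is provided by pathwise pullback attraction to the singleton random attractor recalled in Section~\ref{sec:prelim}: since $\rho$ is tight on $H$ and $\varphi^t_{\theta^{-t}\omega}$ maps any bounded set into arbitrary neighborhoods of $a(\omega)$ as $t \to \infty$, for any $\varepsilon > 0$ one finds a compact $K(\omega, \varepsilon) \subset H$ with $\rho^t_\omega(K) > 1 - \varepsilon$ uniformly in $t$. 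This identifies a weak$^*$ limit $\rho_\omega$, which is $\cF_-$-measurable as a limit of $\cF_-$-measurable random variables along $\mathcal{D}$.

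It remains to verify the two advertised properties. Writing $\theta^{-s}\omega = \theta^{-(s+t)}(\theta^t \omega)$ and using the cocycle identity $\varphi^t_\omega \circ \varphi^s_{\theta^{-s}\omega} = \varphi^{t+s}_{\theta^{-s}\omega}$ together with continuity of $\varphi^t_\omega$ (which allows pushforward to commute with the weak limit),
\[
(\varphi^t_\omega)_* \rho_\omega \;=\; \lim_{s\to\infty} (\varphi^{t+s}_{\theta^{-(t+s)}(\theta^t\omega)})_* \rho \;=\; \rho_{\theta^t\omega};
\]
invariance for all $t \geq 0$ simultaneously, rather than just for rational $t$, follows from continuity of $t \mapsto \varphi^t_\omega$ in the mild sense. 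For the mean,
\[
\bE \rho^t_\omega(B) \;=\; \int \bP\bigl(\varphi^t_{\theta^{-t}\omega}(u) \in B\bigr) \, \rho(du) \;=\; \int P_t(u,B) \, \rho(du) \;=\; \rho(B)
\]
by shift-invariance of Wiener measure and stationarity of $\rho$, and bounded convergence passes this through the weak$^*$ limit. The principal obstacle is the tightness/separability step required to promote scalar martingale convergence into weak$^*$ convergence of random measures on the infinite-dimensional $H$; the remaining identities are essentially bookkeeping from the cocycle and Markov structure.
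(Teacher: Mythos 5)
The paper dispatches this lemma by citing \cite[Theorem~4.2.9]{KuksinShiri}; your proposal reconstructs the classical reverse-time martingale argument that underlies that theorem, so in spirit you are proving the result from first principles rather than quoting it. Your martingale computation is correct: the cocycle decomposition $\varphi^t_{\theta^{-t}\omega} = \varphi^s_{\theta^{-s}\omega}\circ\varphi^{t-s}_{\theta^{-t}\omega}$, the measurability bookkeeping via Proposition~\ref{prop:RDS}(c), the independence of $\cF^{-s}_{-t}$ and $\cF^0_{-s}$, and the use of stationarity of $\rho$ all fit together exactly as needed to make $M^f_t$ a bounded martingale along the increasing filtration $(\cF^0_{-t})$, and Doob's theorem gives the $\cF_-$-measurable limit. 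The verification of $(\varphi^t_\omega)_*\rho_\omega=\rho_{\theta^t\omega}$ and $\bE\rho_\omega=\rho$ from the cocycle identity and bounded convergence is also sound.

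The gap is in the tightness step, which you correctly identify as the ``principal obstacle'' but do not actually close. You argue that since $\varphi^t_{\theta^{-t}\omega}$ maps bounded sets into arbitrarily small neighborhoods of $a(\omega)$, one gets a compact $K(\omega,\varepsilon)$ with $\rho^t_\omega(K)>1-\varepsilon$ uniformly in $t$. But in the infinite-dimensional space $H=L^2(\cO)$, a small ball around $a(\omega)$ is \emph{not} compact, so this does not produce a compact set, and without tightness the a.s.\ convergence of the countably many scalar martingales $M^f_t$, $f\in\mathcal D$, does not by itself yield a limiting probability measure $\rho_\omega$. The fix in this setting must invoke the parabolic smoothing of the cocycle: one needs a random absorbing set that is bounded in $V=H^1_0(\cO)$ (and hence, by compact embedding, precompact in $H$), so that for $t$ past some random absorbing time, $\rho^t_\omega$ is concentrated on a fixed $H$-compact set; together with continuity of $(t,u)\mapsto\varphi^t_{\theta^{-t}\omega}(u)$ on the remaining compact time interval this gives a single compact $K(\omega,\varepsilon)$. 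This is precisely the kind of regularity argument the paper develops in Proposition~\ref{h1:attractor} and the Appendix, and it (or an equivalent compactness input) is what is missing from your tightness claim as written.
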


	The sample measures can be associated with a unique attracting random equilibrium for the situation of~\eqref{spde_intro}, by combining Proposition~\ref{prop:statmeas}, Lemma~\ref{lem:samples}, the almost sure order-preservation $\varphi_{\omega}^t u \leq \varphi_{\omega}^t v$ for all $u \leq v$~\cite[Theorem 5.8]{CV} and the characterization of random attractors by Arnold and Chueshov \cite{ArnoldChueshov}.
	\begin{proposition}\label{prop:attractor_point}
		Consider the RDS induced by the SPDE~\eqref{spde_intro} for any value of $\alpha \in \mathbb{R}$. We have that
		\begin{enumerate}[(a)]
			\item with probability 1, the sample measure $\rho_\omega$ is atomic, i.e., 
			$\rho_\omega = \delta_{a(\omega)}$ where $a = a_\alpha : \Omega \to H$ is an 
			$\cF_-$-measurable $H$-valued random variable such that almost surely
			\[
			\varphi^t_\omega(a(\omega)) = a(\theta^t(\omega)),
			\]
			\item the set valued map $ \omega \mapsto\{a(\omega)\}$ is the unique random attractor of the RDS induced by the SPDE~\eqref{spde_intro}, i.e.~for all bounded $D \subset H$ we have almost surely
			$$ \lim_{t \to \infty}  \sup_{d \in D} \|\varphi_{\theta_{-t}\omega}^t (d) - a(\omega)\|_{H} =0.$$
		\end{enumerate}
	\end{proposition}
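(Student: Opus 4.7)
The plan is to combine three ingredients already in place: the unique stationary measure from Proposition \ref{prop:statmeas}, the correspondence theorem Lemma \ref{lem:samples}, and order preservation of the cocycle $\varphi^t_\omega$, and to feed them into the standard Arnold--Chueshov characterization of singleton random attractors for monotone RDS.

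First, I would establish order preservation. For initial data $u_0 \leq v_0$ a.e.\ on $\cO$ the difference $w_t := \varphi^t_\omega(v_0) - \varphi^t_\omega(u_0)$ satisfies the linear parabolic equation
\[
\partial_t w = \Delta w + \alpha w - (u^2 + uv + v^2) w,
\]
with Dirichlet data and $w_0 \geq 0$, where $u_t := \varphi^t_\omega(u_0)$ and $v_t := \varphi^t_\omega(v_0)$. The zeroth-order coefficient is bounded above on any compact time interval by the pathwise regularity of mild solutions, so the parabolic maximum principle gives $w_t \geq 0$, i.e.\ $\varphi^t_\omega(u_0) \leq \varphi^t_\omega(v_0)$ a.e.

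Second, I would invoke the standard dissipativity estimates for Chafee--Infante type SPDEs from \cite{Caraballoetal, CaraballoLangaRobinson} to produce a compact random absorbing set, and hence a random attractor $\omega \mapsto A(\omega)$. Monotonicity combined with existence of $A(\omega)$ then triggers the Arnold--Chueshov result \cite{ArnoldChueshov}: there exist extremal $\cF_-$-measurable random fixed points $a_-(\omega) \leq a_+(\omega)$ with $A(\omega) \subset [a_-(\omega), a_+(\omega)]$ in the pointwise order. The laws of $a_\pm$ are stationary measures for $(u_t)$, hence by Proposition \ref{prop:statmeas} both coincide with $\rho$; combined with $a_- \leq a_+$ pointwise, this forces $a_- = a_+ =: a$ almost surely (e.g.\ via $\bE(a_+ - a_-) = 0$ tested against positive functions, or by ergodicity of $\theta^t$ applied to the $\theta^t$-invariant event $\{a_- < a_+\}$). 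Claim (a) then follows: Lemma \ref{lem:samples} identifies $\rho_\omega = \delta_{a(\omega)}$, and the equivariance $\varphi^t_\omega(a(\omega)) = a(\theta^t\omega)$ comes from invariance of $A$ under the cocycle.

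For (b), I would use pullback attraction of the random attractor. Given a bounded $D \subset H$, parabolic smoothing (along the same lines as in the proof of Proposition \ref{h1:attractor}) implies that $\varphi^1_{\theta^{-t}\omega}(D)$ is contained almost surely in a random $L^\infty$-bounded set. One can then pick $\cF_-$-measurable super- and sub-solutions $u_\pm(\omega) \in L^\infty(\cO)$ sandwiching every element of $\varphi^1_{\theta^{-t}\omega}(D)$ pointwise, so that monotonicity reduces the pullback convergence of $\sup_{d \in D} \|\varphi^t_{\theta^{-t}\omega}(d) - a(\omega)\|_H$ to convergence of the two trajectories starting from $u_\pm$. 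The latter is a routine consequence of the cocycle identity together with attraction of the compact absorbing set to $\{a(\omega)\}$. The main obstacle is exactly this sandwich step: pointwise comparability is not automatic for $L^2$-bounded sets, so one must first exploit parabolic smoothing to push $D$ into an $L^\infty$-bounded set before the order preservation can be brought to bear. Everything else is a direct assembly of Proposition \ref{prop:statmeas}, Lemma \ref{lem:samples}, and the parabolic maximum principle.
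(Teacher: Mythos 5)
Your proposal is correct and follows essentially the same route the paper indicates: the paper's proof is just a citation to \cite{Caraballoetal}, but the paragraph preceding the proposition describes exactly your combination of the unique stationary measure (Proposition \ref{prop:statmeas}), the correspondence theorem (Lemma \ref{lem:samples}), almost sure order preservation, and the Arnold--Chueshov characterization. Your filling in of the details (maximum principle for the difference equation, extremal fixed points forced to coincide by uniqueness of $\rho$, and the $L^\infty$-sandwich via parabolic smoothing for pullback attraction of $L^2$-bounded sets) is a faithful expansion of that outline.
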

	\begin{proof}
While the covariance operator $Q$ is slightly different in our case, the main arguments from~\cite[Theorem 6.1]{Caraballoetal} carry over. The existence of the attractor follows upon reducing the SPDE~\eqref{spde_intro} into a PDE with random coefficients, which utilizes the stochastic convolution from Lemma \ref{lem:stochConvolution}. Then, one can easily derive an absorbing set using a-priori estimates  and the fact that $F:V_\gamma\to V_\gamma$ is locally Lipschitz. The compactness of the absorbing set follows from the compact embeddings $V_\gamma\hookrightarrow V_{\gamma'}$ for $ \gamma'\leq \gamma$.
	The fact that the random attractor is a singleton is implied by the order-preservation of the system together with the existence of a unique invariant measure, recalling Proposition~\ref{prop:statmeas}. 
	\end{proof}	
 For proving the bifurcations in terms of finite-time Lyapunov exponents, we crucially require the following lemma on 
	regularity of the random attractor $a = a_\alpha$. 
	\begin{proposition}\label{h1:attractor} \
		\begin{itemize}
			\item[(a)] With probability 1, we have that $a(\theta^t \omega) \in V_\gamma$ for all $t \in \bR$. 
			\item[(b)] For any $T > 0$, there exists a $\mathcal{F}_{-\infty}^T$-measurable set $\cA \subset \Omega$ with $\bP(\cA) > 0$ such that
			\begin{align*}
			\|a_{\alpha}(\theta^s\omega)\|_{V_\gamma}\in(0,\varepsilon)\quad \text{ for all } s\in[0,T] \text{ and }\omega\in \cA.
			\end{align*} 
		\end{itemize}
	\end{proposition}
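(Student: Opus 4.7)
My plan is to argue by parabolic smoothing applied to the mild formulation
\[
\varphi^t_\omega(u_0) = e^{t\Delta} u_0 + \int_0^t e^{(t-s)\Delta}\bigl(\alpha u(s) - u(s)^3\bigr)\,\rmd s + \int_0^t e^{(t-s)\Delta}\,\rmd W_s.
\]
For any $t > 0$, the first term lies in $V$ by the analytic semigroup bound $\|e^{t\Delta}\|_{H \to V} \lesssim t^{-1/2}$; the stochastic convolution lies in $C([0, t]; V)$ almost surely by assumption~\eqref{trace} with $s = 1/2$; and the Duhamel term is treated using the one-dimensional Sobolev embedding $V \hookrightarrow L^\infty(\cO)$ together with semigroup smoothing, so that the cubic stays in $H$ and its convolution returns to $V$. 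Applying this to the identity $a(\theta^t \omega) = \varphi^t_\omega(a(\omega))$ for any fixed $t > 0$ gives $a(\theta^t\omega) \in V$ almost surely. A countable intersection over rational $t$, combined with $\theta$-invariance and the continuity of $t \mapsto a(\theta^t\omega)$ in $V$ (itself a consequence of the cocycle property and smoothing), upgrades this to the simultaneous statement for all $t \in \bR$.

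\textbf{Part (b).} The plan is a pullback approximation on a small-noise event. Set
\[
u^R(t, \omega) := \varphi^{t+R}_{\theta^{-R}\omega}(0), \qquad t \in [-R, T], \ R > 0,
\]
so that Proposition~\ref{prop:attractor_point}(b) gives $u^R(t, \omega) \to a(\theta^t\omega)$ in $H$ as $R \to \infty$. Introduce the stochastic convolution $Z^R_t := \int_{-R}^t e^{(t-s)\Delta}\,\rmd W_s$, whose $V$-valued sample paths are continuous on $[-R, T]$ by \eqref{trace} and whose law on $C([-R, T]; V)$ is a nondegenerate Gaussian; consequently the event
\[
B_{R, \eta} := \Bigl\{\omega : \sup_{t \in [-R, T]}\|Z^R_t(\omega)\|_V < \eta\Bigr\} \in \mathcal{F}_{-R}^T
\]
has positive probability for every $R, \eta > 0$. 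On $B_{R, \eta}$, the remainder $v := u^R - Z^R$ solves a deterministic PDE with $V$-small forcing, started from $v(-R) = 0$, and I would combine cubic dissipativity with a $V$-energy bootstrap to trap $v$, and hence $u^R$, in a small $V$-ball throughout $[-R, T]$ uniformly in $R$, provided $\eta$ is small enough. Upgrading the pullback convergence $u^R \to a(\theta^\cdot \omega)$ from $H$ to $V$ on $[0, T]$ (via the linearized difference equation and one more smoothing step) then yields $\sup_{s \in [0, T]}\|a(\theta^s\omega)\|_V < \varepsilon$ on $B_{R, \eta}$, an event measurable with respect to $\mathcal{F}_{-R}^T \subset \mathcal{F}_{-\infty}^T$. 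The strict positivity $\|a(\theta^s\omega)\|_V > 0$ holds almost surely since the zero path is not invariant under the noisy dynamics, and can be imposed by a further null-set modification of $B_{R,\eta}$.

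\textbf{Main obstacle.} The delicate point is the uniform-in-$R$ $V$-control of $u^R$ on $[-R, T]$ in the supercritical regime $\alpha > \lambda_1$, where the linearization at $0$ is unstable and a naive Gronwall estimate would grow exponentially in $R$. The resolution is to exploit the $-u^3$ term: provided $\|u^R\|_V$ stays below a threshold determined by $\alpha$ and $\eta$, cubic dissipativity dominates the linear instability and the $V$-energy estimate closes to a uniform-in-time bound, giving a self-consistent bootstrap. Calibrating $\eta$ first and then $R$ so that this bound holds on all of $[-R, T]$, and tracking the dependence through the Sobolev embedding $V \hookrightarrow L^\infty(\cO)$ needed to handle $(v + Z^R)^3$, is the main technical task.
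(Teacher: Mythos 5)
Your part (a) is essentially repairable but, as written, circular: to bring the Duhamel term back into $V$ you invoke $V \hookrightarrow L^\infty(\cO)$ to keep $u(s)^3$ in $H$, which presupposes the $V$-regularity being proved; one needs either a genuine bootstrap through intermediate spaces or, as the paper does, the fact that $a_\alpha(\omega)$ is distributed according to a (Gaussian-type) measure supported on $H^1_0(\cO)$. The serious problem is part (b). Your scheme controls the pullback trajectory $u^R$ on the \emph{entire} window $[-R,T]$ on the small-noise event $B_{R,\eta}$ and then lets $R\to\infty$ to reach the attractor. Two things break. First, the quantifiers: $a(\theta^s\omega)$ is the $R\to\infty$ limit of $u^R(s,\omega)$, so bounding the limit requires $\omega\in B_{R,\eta}$ for all large $R$; but $\bP(B_{R,\eta})\to 0$ as $R\to\infty$ (the OU process almost surely exceeds any fixed level in $V$ over long time windows), so $\bigcap_R B_{R,\eta}$ is a null event and your conclusion is vacuous. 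Second, and independently, the dynamical claim fails in the only interesting regime $\alpha>\lambda_1$: near the origin the linear instability dominates the cubic term, which is of higher order; cubic dissipativity only saturates growth at amplitude of order $\sqrt{\alpha-\lambda_1}$, i.e.\ $O(1)$, not $O(\varepsilon)$. Hence on $B_{R,\eta}$ with $R \gg (\alpha-\lambda_1)^{-1}\log(\varepsilon/\eta)$ the trajectory started from $0$ at time $-R$ is generically expelled from every small $V$-ball around the origin toward the nonzero equilibria before time $0$; no self-consistent small-ball bootstrap closes. You correctly flag this as ``the main obstacle,'' but the proposed resolution inverts the roles of the linear and cubic terms.

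The paper avoids both issues by splitting past and future. Local positivity (full support) of the stationary/sample measure on $V$ yields a past-measurable event $\cA_1\in\cF^0_{-\infty}$ of positive probability on which $\|a_\alpha(\omega)\|_V<\eta$ already \emph{at time $0$} --- no control over the infinite pullback history is attempted. Independently, a future-measurable event $\cA_2\in\cF_0^T$ of positive probability keeps the OU process small in $V$ only on the fixed finite window $[0,T]$; by independence $\bP(\cA_1\cap\cA_2)>0$. On $\cA_1\cap\cA_2$ smallness is then propagated \emph{forward} over $[0,T]$ via the mild formulation, a truncated (globally Lipschitz) nonlinearity, analytic-semigroup smoothing, and the singular Gronwall/Mittag--Leffler inequality; the amplification factor of order $e^{(\alpha-\lambda_1)T}$ is harmless because $T$ is fixed and $\eta$ is chosen afterwards. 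This forward-only propagation from a positive-probability initial configuration is the missing idea in your argument.
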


	Roughly, Proposition \ref{h1:attractor} will follow from (i) the fact that $\rho(V_\gamma) = 1$, hence $a_\alpha(\omega) \in V_\gamma$ with probability 1 and (ii) that $V_\gamma$-regularity is propagated\footnote{Note that this regularity issue is inherent to the infinite-dimensional problem and does not arise in the previous SODE works \cite[Proposition 4.1]{Callawayetal} and~\cite[Proposition 5.1]{DoanEngeletal}.} in time by the evolution equation \eqref{spde_intro}, and that $\| a(\theta^t \omega)\|_{V_\gamma}$ can be made small by taking $\| a(\omega)\|_{V_\gamma}$ small. Point (i) is immediate from the preceding discussion in this section, while point (ii) is ensured by the existence of the $V_\gamma$-continuous modification of the stochastic convolution in Lemma \ref{lem:stochConvolution} together with regularizing properties of analytic semigroups. See Appendix \ref{appendix} for further details.

	\begin{remark}
	We refer the reader to~\cite{Chueshov2000,Zhao} for further details regarding the regularity of random attractors for stochastic reaction diffusion equations with finite-dimensional additive noise, based on a random dynamical systems approach (without using the correspondence between attractors and invariant measures).	
		 	\end{remark}
	

	\section{Proofs of Theorems \ref{thm:topLEEst} and \ref{thm:volumeFTLE}} \label{sec:proofs}
	
	{\bf Notation}: In the following, the vectors $e_1, e_2, \dots$ denote the orthonormal Fourier basis of the Laplacian $\Delta$ on $\cO = [0,L]$ with Dirichlet boundary conditions, 
	\[
	e_k(x) = \sqrt{\frac{2}{L}}\sin(2 \pi k x / L).
	\]
	We write $\pi_1$ for the orthogonal projection onto 
	the span of $e_1$, and $( \cdot, \cdot) = ( \cdot, \cdot)_{L^2}$ for the $L^2$ inner product. Moreover, we write $\lambda_k = (2 \pi k / L)^2$ for the corresponding eigenvalues of $(-\Delta)$, so that $\Delta e_k = - \lambda_k e_k$. Lastly, in this section we will write $\| \cdot \| = \| \cdot\|_H$ for clarity of notation. 
	
	\subsection{Proof of Theorem \ref{thm:topLEEst}: estimate of top FTLE}

	The following summarizes our estimates of finite-time Lyapunov exponents along the attracting random equilibrium $a_{\alpha}(\omega)$ and immediately implies Theorem \ref{thm:topLEEst}. 
	\begin{proposition} \label{prop:FTLE_1} \
		\begin{itemize} 
			\item[(a)] Unconditionally, 
			\[
			\| D_{a_\alpha(\omega)} \varphi^t_\omega  \| \leq e^{t ( \alpha - \lambda_1)}
			\]
			for all $t > 0$ and with probability 1. 
			\item[(b)] Assume $\alpha - \lambda_1 > 0$. For all $0 < \eta \ll \alpha - \lambda_1, T > 0$ there exists $\cA \in \cF^T_{-\infty}$ with $\mathbb P(\cA) > 0$ such that 
			\[
			\| D_{a_\alpha(\omega)} \varphi^t_\omega \| \geq (1 - \eta) e^{t (\alpha - \lambda_1 - \eta)}
			\]
			for all $t \in [0,T]$, $\omega \in \cA$. 
		\end{itemize}
	\end{proposition}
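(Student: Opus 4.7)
My plan is to handle (a) by a pointwise energy estimate on the variational equation and to prove (b) via a Duhamel expansion against the shifted heat semigroup $e^{t(\Delta + \alpha)}$, on the positive-probability event produced by Proposition \ref{h1:attractor}(b).

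For (a), I would pair the variational equation $\dot v_t = \Delta v_t + \alpha v_t - 3 u_t^2 v_t$ with $v_t$ in $H$ and apply Poincar\'e's inequality $(\Delta v, v) \leq -\lambda_1 \|v\|^2$ valid for $v \in V$, together with the obvious sign $-3(u_t^2 v_t, v_t) = -3 \int_\cO u_t^2 v_t^2 \, \rmd x \leq 0$. This yields $\tfrac{\rmd}{\rmd t}\|v_t\|^2 \leq 2(\alpha - \lambda_1)\|v_t\|^2$ and hence $\|v_t\| \leq e^{t(\alpha - \lambda_1)}\|v_0\|$ pointwise in $\omega$. Taking the supremum over $\|v_0\| = 1$ gives the operator norm bound; note that this estimate is uniform in the base point, so it holds in particular at $a_\alpha(\omega)$ without any control on the attractor.

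For (b), I would first invoke Proposition \ref{h1:attractor}(b) to fix a positive-probability event $\cA \in \cF^T_{-\infty}$ on which $\|a_\alpha(\theta^s \omega)\|_V < \varepsilon$ for all $s \in [0,T]$, with $\varepsilon$ to be chosen small depending on $\eta$ and $T$. Since $\cO \subset \bR$, the Sobolev embedding $V \hookrightarrow L^\infty(\cO)$ gives $\|u_s\|_\infty \leq C \varepsilon$ on $\cA$, where $u_s := a_\alpha(\theta^s \omega)$. Next, set $v_0 := e_1$ and $v_t := D_{a_\alpha(\omega)} \varphi^t_\omega(e_1)$. Applying Duhamel to the variational equation relative to the self-adjoint analytic semigroup $e^{t(\Delta + \alpha)}$, and using $e^{t(\Delta + \alpha)} e_1 = e^{t(\alpha - \lambda_1)} e_1$, I obtain
\begin{equation*}
(v_t, e_1) \;=\; e^{t(\alpha - \lambda_1)} \;-\; 3 \int_0^t e^{(t-s)(\alpha - \lambda_1)} \, (u_s^2 v_s, e_1) \, \rmd s.
\end{equation*}
Bounding $|(u_s^2 v_s, e_1)| \leq \|u_s\|_\infty^2 \|v_s\| \leq C^2 \varepsilon^2 e^{s(\alpha - \lambda_1)}$, where the last step uses part (a), the integral is dominated by $3 C^2 \varepsilon^2 T \, e^{t(\alpha - \lambda_1)}$ uniformly on $[0,T]$. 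Choosing $\varepsilon$ small enough that $3 C^2 \varepsilon^2 T < \eta$ gives $(v_t, e_1) \geq (1 - \eta) e^{t(\alpha - \lambda_1)} \geq (1 - \eta) e^{t(\alpha - \lambda_1 - \eta)}$, and $\|D_{a_\alpha(\omega)} \varphi^t_\omega\| \geq \|v_t\| \geq |(v_t, e_1)|$ yields the claimed lower bound.

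The main obstacle is the uniform $V$-regularity of the attractor along $[0,T]$ on a positive-probability event: in the finite-dimensional analogue of \cite{Callawayetal} the corresponding statement is immediate from local positivity of the stationary measure, but in this SPDE setting it requires parabolic smoothing estimates together with careful control of the stochastic convolution. This is precisely Proposition \ref{h1:attractor}(b), whose proof is deferred to the appendix. Once this regularity is in hand, the perturbative Duhamel argument above is essentially routine. A cone-based variant closer to the authors' announced strategy would instead fix a forward-invariant cone around the $e_1$-direction defined by a quadratic form $Q_\delta$ on $H$; this is heavier but has the advantage of generalizing more naturally to the $\wedge^k H$ computation required for Theorem \ref{thm:volumeFTLE}.
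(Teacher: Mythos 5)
Your proposal is correct, and part (a) coincides with the paper's argument (energy estimate, sign of the cubic term, min--max/Poincar\'e). For part (b), however, you take a genuinely different route. The paper does \emph{not} use a Duhamel expansion: it introduces the quadratic form $Q_\delta(v) = \delta\|\pi_1 v\|^2 - \|\pi_1^\perp v\|^2$, proves a differential inequality $\tfrac12\tfrac{\rmd}{\rmd t}Q_\delta(v_t) \geq (\alpha-\lambda_1-2\delta)Q_\delta(v_t)$ with $\delta=\sqrt\varepsilon$ on the event where $\|a_\alpha(\theta^s\omega)\|_V$ is small (Lemma \ref{lem:topFTLE}, itself a special case of Lemma \ref{lem:Qgeneral}), and then converts cone invariance into a norm lower bound for $v_0\in\cC_{\delta/M}$. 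Your perturbative argument --- pairing the mild formulation against $e_1$, bounding the multiplication operator by $\|u_s\|_\infty^2 \leq C^2\varepsilon^2$ via the one-dimensional Sobolev embedding, and absorbing the error term $3C^2\varepsilon^2 T\, e^{t(\alpha-\lambda_1)}$ --- is sound and in fact yields the slightly sharper conclusion $(v_t,e_1)\geq(1-\eta)e^{t(\alpha-\lambda_1)}$ with no loss in the exponent. The trade-offs: your $\varepsilon$ must shrink like $\sqrt{\eta/T}$, so the smallness threshold on the attractor degrades with the time horizon, whereas the cone argument's threshold $\delta=\sqrt\varepsilon$ is uniform in $T$ (only the event $\cA$ from Proposition \ref{h1:attractor}(b) depends on $T$); and, as you anticipate, the quadratic-form machinery is what the authors actually need because it extends verbatim to $\wedge^k H$ for Theorem \ref{thm:volumeFTLE}, where a scalar Duhamel pairing against a single blade would be considerably messier. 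You also correctly locate the real technical content in Proposition \ref{h1:attractor}(b).
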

	
	\subsubsection*{Proof of the upper bound (a)}
	The bound from above in part (a) is straightforward from the energy estimate derived by taking a time derivative of $\| v_t\|^2$, where 
	$v_t := D_{a(\omega)} \varphi^t_\omega v_0$ for fixed $v_0 \in L^2$. Recalling with \eqref{variational:eq} the variational random PDE
	\begin{equation} \label{eq:var_eq_attract}
	\dot v_t = (\alpha + \Delta) v_t - 3 a_\alpha(\theta_t\omega)^2 v_t \, ,
	\end{equation}
	we see that
	\[
	\frac12 \frac{\txtd}{\txtd t} \| v_t\|^2 = ( v_t, \dot v_t) = (v_t, (\alpha + \Delta) v_t) - (v_t, 3 a_\alpha(\omega)^2 v_t) \leq (v_t, (\alpha + \Delta) v_t) \, .
	\]
	Since $(\alpha + \Delta)$ is self-adjoint with eigenvalues $\alpha - \lambda_i, i \geq 1$, we see in view of the min-max principle for closed self-adjoint operators that 
	\[
	(w, (\alpha + \Delta) w) \leq (\alpha - \lambda_1) \| w\|^2
	\]
	for all $w \in L^2$. In conclusion, 
	\[
	\frac{\txtd}{\txtd t} \log \| v_t\| \leq \alpha - \lambda_1 \, ,
	\]
	from which the estimate in (a) follows. 
	
	\subsubsection*{Proof of the lower bound (b)}
	Our primary tool in this proof will be the family of quadratic forms on $L^2 \times L^2$
	\[
	Q_\delta(v, w) = \delta (\pi_1 v, \pi_1 w)_{L^2} -   (\pi_1^\perp v, \pi_1^\perp w)_{L^2} \, , \quad \delta > 0 \, . 
	\]
	In what follows, we will abuse notation and write $Q_\delta(v) = Q_\delta(v, v)$. Conceptually, quadratic forms such as these specify closed cones
	\[
	\cC_\delta = \{ v \in L^2 : Q_\delta(v) \geq 0 \} = \left\{ \| \pi_1^\perp v\|^2 \leq \delta \| \pi_1 v\|^2 \right\} \, , \quad \delta > 0 \, , 
	\]
	roughly parallel to the span of the first eigenmode $e_1$. 
	It is evident that the shifted heat semigroup $e^{(\alpha + \Delta) t}$ leaves
	these cones $\cC_\delta$ invariant, expanding vectors within them to order
	$e^{t (\alpha - \lambda_1)}$.  
	
	In summary, our lower bound on $\| D_{a(\omega)} \varphi^t_\omega\|$ will come from showing the following:  (1) the operator $D_{a(\omega)} \varphi^t_\omega$ is close to the heat semigroup $e^{(\alpha + \Delta) t}$ conditioned on an event $\cA$ along which the perturbation factor  $3 a_\alpha(\theta_t \omega)^2$ in \eqref{eq:var_eq_attract} is small in an appropriate sense; and (2) we can transfer cone preservation and vector expansion of the heat semigroup to the nearby time-$t$ cocycle $D_{a(\omega)} \varphi^t_\omega$. 
	The following makes this more precise: 
	\begin{lemma} \label{lem:topFTLE}
		Assume $\alpha > \lambda_1$. Let $T > 0$ and $\varepsilon > 0$ with $ \sqrt{\varepsilon} \ll \alpha - \lambda_1$ be fixed, and let $\omega \in \Omega$ be a noise path with the property that the nonlinear term $B_\omega^t := -3 a_\alpha(\theta^t \omega)^2$ satisfies 
		\begin{align}\label{eq:makeNemytskiiSmall}
		\| B_\omega^t\|_{\cC(\cO)} \leq \varepsilon
		\end{align}
		for all $t \in [0,T]$. 
		Finally, assume $v_0 \in L^2$ satisfies $Q_{\delta}(v_0) > 0$, where 
		\[
		\delta := \sqrt{\varepsilon}.
		\] 
		
		Under these conditions, the time-$t$ solution $v_t = D_{a(\omega)} \varphi^t_\omega(v_0)$ to the first variation equation \eqref{eq:var_eq_attract} satisfies
		\begin{align}\label{eq:lowerBoundQdelt1}
		\frac12 \frac{\txtd}{\txtd t} Q_{\delta}(v_t) \geq \left( \alpha - \lambda_1 - 
		2 \delta
		\right) Q_{\delta}(v_t) \, .
		\end{align}
	\end{lemma}
	While in Section~\ref{subsec:bounding_FTLE} we will prove a more general result, we have included the following proof for convenience of the reader. 
	\begin{proof} 
To start, observe that 
		\begin{align*}
	\frac12 \frac{\txtd}{\txtd t} Q_\delta(v_t) & = Q_\delta(v_t, \dot v_t)  = \delta (\pi_1 v_t, \pi_1 \dot v_t) - (\pi_1^\perp v_t, \pi_1^\perp \dot v_t) \,,  
	\end{align*}
hence
		\begin{align*}
	\frac{1}{2} \frac{\txtd}{\txtd t} Q_\delta(v_t) & 
	\geq \delta (\alpha - \lambda_1) \| \pi_1 v_t\|^2 - (\alpha - \lambda_2) \| \pi_1^\perp v_t\|^2 - (1 + \delta) \varepsilon \| v_t\|^2 \\
	& \geq  \left(\alpha - \lambda_1 - \frac{(1 + \delta)\varepsilon}{\delta} \right) \delta \| \pi_1 v_t\|^2 - \left( \alpha - \lambda_2 + (1 + \delta) \varepsilon \right) \| \pi_1^\perp v_t\|^2 \, , 
	\end{align*}
	having used the estimates
	\begin{gather*}
	( \pi_1 v , (\alpha + \Delta) \pi_1 v) \geq (\alpha - \lambda_1) \| \pi_1 v\|^2 \, ,  \\
	( \pi_1^\perp v , (\alpha + \Delta) \pi_1^\perp v) \leq (\alpha - \lambda_2) \| \pi^\perp_1 v\|^2
	\end{gather*}
	and decomposing $\| v\|^2 = \| \pi_1 v \|^2 + \| \pi_1^\perp v\|^2$. 
	On assuming that 
	\[
		(1 + \delta) \varepsilon \leq \alpha - \lambda_1 - \frac{(1 + \delta) \varepsilon}{\delta} \,, 
	\]
	which can be arranged with $\delta = \sqrt{\varepsilon}$ and $\varepsilon$ taken sufficiently small, 
	it follows that 
	\begin{align*}
	\frac12 \frac{\txtd}{\txtd t} Q_\delta(v_t) \geq \left( \alpha - \lambda_1 - \frac{(1 + \delta) \varepsilon}{\delta} \right) Q_\delta(v_t) \, , 
	\end{align*}
	which implies the desired bound. 
	\end{proof}
	
	\begin{proof}[Completing the proof of Proposition \ref{prop:FTLE_1}(b)]


		Assume for the moment that equation \eqref{eq:lowerBoundQdelt1} holds for all $t \in [0,T]$. Then,  
	\begin{equation*}
	\frac12 \frac{\frac{\txtd}{\txtd t} Q_\delta(v_t)}{Q_\delta(v_t)} \geq  \alpha - \lambda_1 - 2 \delta  \, , 
	\end{equation*}
hence
,	\begin{equation} \label{est:Qdeltat}
	Q_\delta(v_t)  \geq Q_\delta(v_0) \exp \{2t \left( \alpha - \lambda_1 - 2 \delta \right)\} \,. 
	\end{equation}
		To translate this to a lower bound on norms: Take $M > 1$ and assume $Q_{\delta / M}(v_0) \geq 0$. Then, 
	\[
	Q_{\delta}(v_0) = \delta \| \pi_1 v_0 \|^2 - \| \pi_1^\perp v_0 \|^2
	= \delta \left( 1 - \frac{1}{M} \right) \| \pi_1 v_0 \|^2 + Q_{\delta / M}(v_0) \geq \frac{\delta(M-1)}{M} \| \pi_1 v_0 \|^2. 
	\]
	Using that $v \in \cC_\eta$ implies $\| v\|^2 \leq (1 + \eta) \| \pi_1 v\|^2$ for $\eta > 0$, we have
	\begin{equation} \label{est:Qdelta0}
	Q_\delta(v_0) \geq \frac{\delta (M-1)/M}{1 + \delta / M} \| v_0\|^2
	= \frac{\delta (M-1)}{M + \delta} \| v_0\|^2 \, .
	\end{equation}
	Since $Q_\delta(w) \leq \delta \| w\|^2$ unconditionally for all $w \in L^2$, we combine~\eqref{est:Qdeltat} and~\eqref{est:Qdelta0} to obtain
	\[
	\| v_t\|^2 \geq \frac{M-1}{M + \delta} \exp \{2t \left( \alpha - \lambda_1 - 2 \delta \right) \}\| v_0 \|^2 \quad \text{ for all } \quad v_0 \in \cC_{\delta / M} \, .
	\]
	We can clearly make the prefactor as close to 1 as desired on taking $M$ sufficiently large.

To complete the proof, it suffices to arrange for \eqref{eq:lowerBoundQdelt1} to hold for all $t \in [0,T]$ on a positive-probability event $\mathcal A \in \mathcal F_{-\infty}^T$. This follows by Lemma \ref{lem:topFTLE} as long as $\| B^t_\omega\|_{L^\infty}$ can be made small as in \eqref{eq:makeNemytskiiSmall} for all $t \in [0,T]$. This, in turn, is implied by Proposition \ref{h1:attractor}, which allows to make $a_\alpha(\theta^t \omega)$ small in $V_\gamma$, hence also in $\cC(\cO)$, for all $t \in [0,T]$.  
	\end{proof}

	 \begin{remark}\label{rmk:useOfCovariance} 
Note that the only information we used in the preceding argument was that $a_\alpha(\theta^t\omega)$ could be made sufficiently small in $V_\gamma$ (hence also in $
\cC(\mathcal O)$) for all $t \in [0,T]$. Indeed, the argument goes through with $a_\alpha(\theta^t \omega)$ replaced by any trajectory $(u_t)_{t \geq 0}$, as long as $\| u_t\|_{V_\gamma}$ remains sufficiently small for all $t \in [0,T]$. By Lemma \ref{lem:propagationReg} in the Appendix, this can be arranged with positive probability for all fixed initial $u_0$ with $\| u_0\|_{V_\gamma}$ sufficiently small. Indeed, this alternative version of 
 Proposition \ref{prop:FTLE_1} makes no use at all of the stationary measure or the point attractor, and relies only on the RDS framework of Proposition \ref{prop:RDS}, hence only on the assumption \eqref{trace} on the covariance operator $Q$. These observations apply equally well to the forthcoming arguments of Section \ref{subsec:bounding_FTLE} controlling finite-time $k$-dimensional volumes. 
	 \end{remark}

	\subsection{Proof of Theorem \ref{thm:volumeFTLE}: bounding volume growth} \label{subsec:bounding_FTLE}
	
	We now wish to apply similar ideas to estimate volume growth rates. We begin with some background on wedge spaces and norms in Section \ref{subsubsec:wedges}, allowing us to state Proposition \ref{prop:FTvol}  which summarizes the volume growth bounds required in Theorem \ref{thm:volumeFTLE}. 
	The proof of Proposition \ref{prop:FTvol} via a cones argument will occupy the remainder of the Section.

	
	%
	%
	
	
	\subsubsection{Background on wedge spaces}\label{subsubsec:wedges}
	
	Let $H$ be a separable Hilbert space. Given $v_1, \cdots, v_k \in H$ 
	we write 
	\[
	v_1 \wedge \cdots \wedge v_k
	\]
	for the \emph{wedge product} of $\{ v_i\}$ (sometimes referred to as a $k$-blade), and write $\wedge^k H$
	for the closure of the set of finite linear combinations of 
	$k$-blades under the norm induced by the inner product
	\[
	( v_1 \wedge \cdots \wedge v_k, w_1 \wedge \cdots \wedge w_k) 
	= \det \left[ (v_i, w_j)_{ij} \right] \, , 
	\]
	where $(v_i, w_j)_{ij}$ denotes the $k \times k$ matrix with $i,j$-th entry 
	$(v_i, w_j)$. Let $\| \cdot\|_{\wedge^k H}$ denote the norm induced by this inner product. 
	
	We recall the following elementary properties of $\wedge^k H$: 
	\begin{itemize}
		\item[(1)] If $e_1, e_2, \cdots \in H $ is a complete orthonormal system, then 
		an orthonormal basis for $\wedge^k H$ is given by the set of $k$-blades
		\[
		e_{\bf i} =  e_{i_1} \wedge \cdots \wedge e_{i_k},
		\]
		as ${\bf i} = (i_1, \cdots, i_k)$ ranges over the set of all distinct indices
		$i_1 < i_2 < \cdots < i_k$. 
		
		\item[(2)] A bounded linear operator $A$ on $H$ gives rise to an operator
		$\wedge^k A$ in $\mathcal B(\wedge^k H)$, i.e.~the space of bounded linear operators from $\wedge^k H$ into itself, via 
		$$\wedge^k A (v_1 \wedge \cdots \wedge v_k) = A v_1 \wedge \cdots \wedge A v_k.$$
		This operator has the property that
		\begin{align}\label{eq:normOfWedgeOperator}\begin{aligned}
		\| \wedge^k A\|_{\wedge^k H} &= \sup\{ | \det(A|_E) | : E \subset H , \dim E = k \} \\
	& = \sup\left\{ {\| A v_1 \wedge \dots \wedge A v_k \|_{\wedge^k H} \over \| v_1 \wedge \dots \wedge v_k \|_{\wedge^k H}} : v_1, \dots, v_k \in H \text{ linearly independent}\right\}
		\end{aligned} \end{align}
		where for the purposes of defining $\det$ we view $A|_E : E \to A(E)$ as a linear operator of finite-dimensional inner product spaces, and set $\det(A|_E) = 0$ if $\dim A(E) < \dim E$. 
	\end{itemize}
	Below, we will abuse notation somewhat and write $\| \cdot\| = \| \cdot\|_{\wedge^k H}$. 
	
	We are now in position to formulate the estimates needed in the proof of Theorem \ref{thm:volumeFTLE}. 
	\begin{proposition}\label{prop:FTvol} \
		\begin{itemize}
			\item[(a)] Unconditionally, for all $k \geq 1$ we have that
			\[
			\|\wedge^k D_{a(\omega)} \varphi^t_\omega\| \leq e^{t \sum_{i =1 }^k ( \alpha - \lambda_i)}
			\]
			for all $t > 0$ with probability 1. 
			
			\item[(b)] Assume $\alpha - \lambda_r > 0 > \alpha - \lambda_{r + 1}$ for some $r \geq 1$. Let $\eta > 0$ be sufficiently small, $T > 0$ arbitrary. Then, 
			there exists $\cA \in \cF^T_{-\infty}$ with $\mathbb P(\cA) > 0$ such that for all $\omega \in \cA, t \in [0,T]$, we have
			\[
			\| \wedge^k D_{a(\omega)} \varphi^t_\omega \| \geq (1 - \eta) e^{t \sum_{i = 1}^k (\alpha - \lambda_i - \eta)} 
			\]
			for each $k \in \{ 1, \cdots, r\}$. 
		\end{itemize}
		
	\end{proposition}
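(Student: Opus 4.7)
The plan is to lift the cone-invariance argument of Section \ref{subsec:bounding_FTLE} from $H$ to the wedge space $\wedge^k H$. Given linearly independent $v_0^{(1)}, \ldots, v_0^{(k)} \in H$, let $v_t^{(j)} = D_{a(\omega)}\varphi^t_\omega v_0^{(j)}$ solve the first variation equation \eqref{eq:var_eq_attract}, and set $V_t := v_t^{(1)} \wedge \cdots \wedge v_t^{(k)}$. By the product rule, $\dot V_t = \mathcal{D}_{A_t} V_t$, where $A_t := (\alpha + \Delta) - 3 a_\alpha(\theta^t\omega)^2$ and
\[
\mathcal{D}_A(w_1 \wedge \cdots \wedge w_k) := \sum_{j=1}^k w_1 \wedge \cdots \wedge A w_j \wedge \cdots \wedge w_k
\]
is the derivation on $\wedge^k H$ induced by a bounded linear operator $A$. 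The map $A \mapsto \mathcal{D}_A$ preserves order on self-adjoint operators, and $\mathcal{D}_{\alpha+\Delta}$ is diagonal in the orthonormal basis $\{ e_{i_1} \wedge \cdots \wedge e_{i_k}\}$ of $\wedge^k H$ with eigenvalues $\sum_j (\alpha - \lambda_{i_j})$; its top eigenvalue $\Sigma_k := \sum_{i=1}^k (\alpha - \lambda_i)$ is simple, attained on $E_* := e_1 \wedge \cdots \wedge e_k$, and separated from the remainder of the spectrum by the gap $g_k := \lambda_{k+1} - \lambda_k > 0$.

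Part (a) then follows by an energy estimate: since $-3 a_\alpha^2 \leq 0$ as a self-adjoint operator on $H$,
\[
\frac{1}{2} \frac{\rmd}{\rmd t} \|V_t\|^2 = \langle V_t, \mathcal{D}_{A_t} V_t\rangle \leq \langle V_t, \mathcal{D}_{\alpha+\Delta} V_t\rangle \leq \Sigma_k \|V_t\|^2 \, ,
\]
and taking the supremum over simple $k$-blades $V_0$ (using the volume characterization of $\|\wedge^k\cdot\|$ in Section \ref{subsubsec:wedges}) gives the unconditional bound.

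For part (b), let $\pi \in \mathcal{B}(\wedge^k H)$ denote the orthogonal projection onto $\mathbb{R} E_*$, and introduce on $\wedge^k H$ the family of quadratic forms $Q_\delta^{(k)}(V) := \delta \|\pi V\|^2 - \|\pi^\perp V\|^2$ with associated closed cones $\cC_\delta^{(k)} = \{ Q_\delta^{(k)} \geq 0\}$, in direct analogy with Section \ref{subsec:bounding_FTLE}. Applying Proposition \ref{h1:attractor}(b) with $\varepsilon > 0$ small produces an event $\cA \in \cF_{-\infty}^T$ of positive probability on which $\|a_\alpha(\theta^s\omega)\|_V^2 \leq \varepsilon$ for all $s \in [0,T]$; set $\delta := \sqrt\varepsilon$. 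On $\cA$, starting from $V_0 := E_* \in \cC_\delta^{(k)}$, the target is the differential inequality
\[
\frac{\rmd}{\rmd t} Q_\delta^{(k)}(V_t) \geq 2 \bigl( \Sigma_k - C_k \sqrt\varepsilon \bigr) Q_\delta^{(k)}(V_t)
\]
for some constant $C_k > 0$. The same norm comparisons as in the proof of Proposition \ref{prop:FTLE_1}(b) will then produce a lower bound of the form $\|V_t\| \geq (1 - O(\sqrt\varepsilon)) e^{t (\Sigma_k - O(\sqrt\varepsilon))} \|V_0\|$, with $\|V_0\| = 1$, yielding (b) upon letting $\varepsilon \to 0$. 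Since Proposition \ref{h1:attractor}(b) yields a single event on which $\|a_\alpha\|_V$ is small, the conclusion holds simultaneously for all $k \in \{1, \ldots, r\}$.

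The main obstacle is establishing this differential inequality for $Q_\delta^{(k)}$. Decomposing $\dot V_t = \mathcal{D}_{\alpha+\Delta} V_t + \mathcal{D}_{B_\omega^t} V_t$ with $B_\omega^t := -3 a_\alpha(\theta^t\omega)^2$, the heat-like contribution evaluates cleanly to $\Sigma_k Q_\delta^{(k)}(V_t) + g_k \|\pi^\perp V_t\|^2$, because $\pi$ is the top spectral projector of $\mathcal{D}_{\alpha+\Delta}$. The multiplication operator $B_\omega^t$ obeys $\|B_\omega^t\|_{\mathcal{B}(H)} \leq 3 \|a_\alpha\|_\infty^2 \leq C \|a_\alpha\|_V^2 \leq C \varepsilon$ by the one-dimensional Sobolev embedding $V \hookrightarrow L^\infty$, so $\|\mathcal{D}_{B_\omega^t}\|_{\mathcal{B}(\wedge^k H)} \leq k C \varepsilon$. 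The genuine difficulty is that $\mathcal{D}_{B_\omega^t}$ does not preserve the decomposition $\pi \oplus \pi^\perp$, so the perturbation contribution involves mixed terms such as $\langle \pi V_t, \mathcal{D}_{B_\omega^t} \pi^\perp V_t\rangle$; these must be tamed by a Young inequality with weight tuned to $\delta = \sqrt\varepsilon$ so that the resulting $\|\pi^\perp V_t\|^2$ loss is absorbed by the positive $g_k \|\pi^\perp V_t\|^2$ contribution from the heat part. This is feasible provided $\varepsilon$ is sufficiently small relative to $\min_{1 \leq k \leq r} g_k$, and since this minimum is strictly positive, a uniform choice of $\varepsilon$ handles all $k \leq r$ simultaneously.
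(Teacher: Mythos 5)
Your proposal is correct and mirrors the paper's argument: part (a) proceeds via the energy estimate using that the derivation induced by the negative semi-definite operator $B^t_\omega$ on $\wedge^k H$ is itself negative semi-definite, and part (b) via a cone-invariance argument in $\wedge^k H$ with the quadratic form $Q^{(k)}_\delta$, $\delta = \sqrt{\varepsilon}$, absorbing the $O(\varepsilon)$ perturbation using the spectral gap $\lambda_{k+1}-\lambda_k$ and invoking Proposition~\ref{h1:attractor}(b) to produce the positive-probability event $\cA$. The only (cosmetic) difference is in handling the perturbation term: where you propose splitting $\mathcal D_{B^t_\omega}$ into diagonal and off-diagonal blocks and tuning a Young inequality, the paper's Lemma~\ref{lem:Qgeneral} simply bounds $|Q^{(k)}_\delta(V_t,\mathcal D_{B^t_\omega}V_t)|\leq (1+\delta)k\varepsilon\|V_t\|^2$ in one shot and then verifies the required differential inequality coefficient-by-coefficient in the $e_{\mathbf{i}}$ basis under the gap condition $\varepsilon(1+\delta)^2 k/\delta\leq \Lambda_{\mathbf{i}_0}-\Lambda_{\mathbf{i}}$, which avoids the bookkeeping of mixed terms.
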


	\subsubsection{Proof of Proposition \ref{prop:FTvol}(a): upper bound on $k$-dimensional volume growth}
	
	In using wedge products to derive upper bounds we follow a long tradition of authors, e.g., Temam~\cite{Temam}, Debussche~\cite{Debussche}). 
	
	To this end, let $k \geq 1$ and fix ${\bf v}_0 = v^1_0 \wedge \cdots \wedge v^k_0 \in \wedge^k H$, writing
	\[
	{\bf v}_t = v^1_t \wedge \cdots \wedge v^k_t = \wedge^k D_{a(\omega)} \varphi^t_\omega ({\bf v}_0) \,. 
	\] 
	Recalling that $\partial_t v_t^i  = (\alpha + \Delta + B_\omega^t) v_t^i$ for all $i=1, \dots, k$, we compute
	\begin{align}\label{eq:energyEstimateUpperBoundWedgeK}
\frac12 \frac{\txtd}{\txtd t} \| {\bf v}_t\|^2 
	= \sum_{j = 1}^k ( {\bf v}_t , v_t^1 \wedge \cdots \wedge (\alpha + \Delta + B_\omega^t ) v_t^j \wedge \cdots \wedge v_t^k ).
	\end{align}
	We will use the following linear algebra lemma. 
	\begin{lemma}
		Let $B$ be a bounded, negative semi-definite operator on a separable Hilbert space $H$ and let $k \geq 1$. Then, the operator $\hat B^{(k)}$ on $\wedge^k H$
		defined by
		\[
		\hat B^{(k)} (v_1 \wedge \cdots \wedge v_k) = \sum_{j = 1}^k v_1 \wedge \cdots \wedge B v_j \wedge \cdots \wedge v_k 
		\]
		is negative semi-definite as an operator on $\wedge^k H$. 
	\end{lemma}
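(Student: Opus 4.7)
The plan is to recognise $\hat B^{(k)}$ as the infinitesimal generator of the contraction semigroup $(\wedge^k e^{tB})_{t\ge 0}$ on $\wedge^k H$, and to deduce negative semi-definiteness from the non-increasing behaviour of $t\mapsto \|\wedge^k e^{tB}v\|^2$. To start, since $B$ is bounded and self-adjoint with $\sigma(B)\subset(-\infty,0]$, the Borel functional calculus gives $\|e^{tB}\|_{\mathrm{op}}\le 1$ for all $t\ge 0$. Combined with the singular-value characterisation in property~(2) above, which implies $\|\wedge^k A\|_{\mathrm{op}}\le \|A\|_{\mathrm{op}}^k$ for every bounded operator $A$, this yields $\|\wedge^k e^{tB}\|_{\mathrm{op}}\le 1$; using also the semigroup identity $\wedge^k e^{(t+s)B}=(\wedge^k e^{tB})(\wedge^k e^{sB})$, the function $\phi(t):=\|\wedge^k e^{tB}v\|^2$ is non-increasing on $[0,\infty)$ for every $v\in\wedge^k H$.

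Next I would identify the generator. On a decomposable element $v=v_1\wedge\cdots\wedge v_k$, the Leibniz rule applied to the $k$-linear wedge map gives
\[
\frac{d}{dt}\bigg|_{t=0}\bigl(e^{tB}v_1\wedge\cdots\wedge e^{tB}v_k\bigr)=\sum_{j=1}^k v_1\wedge\cdots\wedge Bv_j\wedge\cdots\wedge v_k=\hat B^{(k)}v.
\]
Because $B$ is bounded, $\hat B^{(k)}$ extends by linearity and continuity to a bounded operator on all of $\wedge^k H$, which agrees with the (bounded) generator of the uniformly continuous semigroup $(\wedge^k e^{tB})_{t\ge 0}$. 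Self-adjointness of $B$ on $H$ transfers to self-adjointness of $\hat B^{(k)}$ on $\wedge^k H$; this is transparent from the determinant formula for the wedge inner product when one checks symmetry of the quadratic form on blades and extends by continuity. Hence $\phi'(0)=2(\hat B^{(k)}v,v)$, and combining with the previous paragraph gives $(\hat B^{(k)}v,v)\le 0$ for every $v\in\wedge^k H$, as desired.

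The main obstacle will be ensuring that the algebraic formula for $\hat B^{(k)}$ on decomposable elements truly defines the generator on \emph{all} of $\wedge^k H$, rather than only on the algebraic span of $k$-blades. This is handled by the norm-continuity of $(\wedge^k e^{tB})$, which follows from $B$ being bounded, together with a uniform estimate of the form $\|\wedge^k e^{tB}-I-t\hat B^{(k)}\|_{\mathrm{op}}=O(t^2)$ obtained by expanding the wedge product of the power series for $e^{tB}$ and controlling the remainder in terms of $\|B\|$. As an alternative one could diagonalise $B$ via the spectral theorem and use property~(1) of wedge spaces to obtain an orthonormal basis $(e_{\mathbf i})$ on which $\hat B^{(k)}$ acts as multiplication by $\sum_j \mu_{i_j}\le 0$; however, the semigroup route is preferable as it treats pure point and continuous spectrum uniformly.
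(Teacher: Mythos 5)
Your proof is correct and follows essentially the same strategy as the paper: both recognise $\hat B^{(k)}$ as the (bounded, self-adjoint) generator of the semigroup $\wedge^k e^{tB}$, show that this semigroup is a contraction, and differentiate $\|\wedge^k e^{tB}v\|^2$ at $t=0$. The only difference is cosmetic: you obtain $\|\wedge^k e^{tB}\|\le 1$ from the singular-value identity $\|\wedge^k A\|=\prod_{i\le k}\sigma_i(A)\le\|A\|^k$, whereas the paper reaches the same conclusion by orthogonalising a blade via Gram--Schmidt and applying $\|e^{tB}\|\le 1$ factor by factor; your version is slightly more direct and also handles non-decomposable elements of $\wedge^k H$ without a separate density argument.
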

	\begin{proof}
		To start, observe that
		\[
		\hat B^{(k)} = \frac{\txtd}{\txtd t} \bigg|_{t = 0} \wedge^k e^{B t} 
		\]
		where on the RHS is the Frechet derivative at $t = 0$ of $t \mapsto \wedge^k e^{B t}$. Immediately it follows that $\hat B^{(k)}$ is self-adjoint. To check it is negative semidefinite, we will show that $\wedge^{k} e^{B t}$ is a contraction semigroup, i.e., 
		\[
		\| \wedge^k e^{B t}\| \leq 1
		\]
		for all $t > 0$. If this is the case, then
		\[
		( \hat B^{(k)}(v_1 \wedge \cdots \wedge v_k) , v_1 \wedge \cdots \wedge v_k) = \frac12 \frac{\txtd}{\txtd t} \bigg|_{t = 0} \| \wedge^k e^{B t}(v_1 \wedge \cdots \wedge v_k)\|^2 \leq 0
		\]
		for all $v_1 \wedge \cdots \wedge v_k \in \Lambda^k H$, hence $\hat B^{(k)}$ is negative semidefinite.

		To check that $\wedge^k e^{B t}$ is a contraction semigroup, it suffices by the characterization in \eqref{eq:normOfWedgeOperator} to estimate $\| \wedge^k e^{B t} (v_1 \wedge \dots \wedge v_k)\|$ for some linearly independent set $\{ v_1, \cdots, v_k\} \subset H$. Applying the Gram Schmidt process to this set of vectors, let $\{ w_1, \cdots, w_k\} \subset H$ be an orthogonal set for which
		$w_i \in \operatorname{Span}\{ v_1, \dots, v_i\}$ for each $1 \leq i \leq k$. On cancelling repeated wedge terms of the form $v_i \wedge v_i$, it follows that
 		\[
		v_1 \wedge \cdots \wedge v_k = w_1 \wedge \cdots \wedge w_k \, .
		\]
		Then,
		\begin{align*}
		\| \wedge^k e^{B t} (v_1 \wedge \cdots \wedge v_k) \| & = 
		\| \wedge^k e^{B t} (w_1 \wedge \cdots \wedge w_k)\| \\
		& \leq \prod_{i =1 }^k \| e^{B t} w_i\| \leq \prod_{i =1 }^k \| w_i\| \, ,
		\end{align*}
		having used that $B$ negative semi-definite implies that $e^{B t}$ is a contraction semigroup. Note now that since the $\{ w_i\}$ are orthogonal, $(w_i, w_j) = \| w_i\|^2 \delta_{ij}$ (here $\delta_{ij} = 1$ if $i = j$ and $= 0$ if $i \neq j$), hence $\det (w_i, w_j) = \prod_{i = 1}^k \| w_i\|^2$. In view of the definition of $\| \cdot \| = \| \cdot \|_{\wedge^k H}$, we conclude that
		\[
		\prod_i \| w_i \| = \| w_1 \wedge \cdots \wedge w_k \| = \| v_1 \wedge \cdots \wedge v_k\| \, , \]
		completing the proof. 
	\end{proof}
	
	To complete the upper bound as in part (a) of Proposition~\ref{prop:FTvol}, observe from 
	\eqref{eq:energyEstimateUpperBoundWedgeK} and the previous Lemma that
	\[
	\frac12 \frac{\txtd}{\txtd t} \| {\bf v}_t\|^2 \leq \sum_{j = 1}^k ({\bf v}_t, v_t^1 \wedge \cdots \wedge (\alpha + \Delta) v_t^j \wedge \cdots \wedge v_t^k) 
	\leq \left( \sum_{j = 1}^k (\alpha - \lambda_j) \right) \| {\bf v}_t\|^2
	\]
	by the min-max principle. The estimate in (a) now follows. 
	
	\subsubsection{Proof of Proposition \ref{prop:FTvol}(b): Quadratic forms on $\wedge^k H$}
	
	\newcommand{\bi}{{\bf i}}
	\newcommand{\ve}{\varepsilon}
	
	For $\delta > 0$ 
	we define the quadratic form $Q^{(k)}_\delta$ on $\wedge^k H$ 
	\begin{align*}
	Q_\delta^{(k)} (v_1 \wedge \cdots \wedge v_k , w_1 \wedge \cdots \wedge w_k) &= \delta \langle \wedge^k \Pi_k (v_1\wedge \cdots \wedge v_k) , 
	w_1 \wedge \cdots \wedge w_k \rangle  \\
	& -  \langle \wedge^k \Pi_k^\perp (v_1\wedge \cdots \wedge v_k) , 
	w_1 \wedge \cdots \wedge w_k \rangle
	\end{align*}
	where $\Pi_k$ denotes orthogonal projection onto the span of the first $k$ eigenmodes $\{ e_1, \cdots, e_k\}$ of $\Delta$, and $\Pi_k^\perp = I - \Pi_k$. 
	Equivalently, $\wedge^k \Pi_k$ is the orthogonal projection onto the span of $e_{\bi_0} = e_1 \wedge \cdots \wedge e_k, \bi_0 := (1,\cdots, k)$. In what follows, we will again abuse notation and write $$Q_\delta^{(k)} (v_1 \wedge \cdots \wedge v_k ) = Q_\delta^{(k)} (v_1 \wedge \cdots \wedge v_k ,v_1 \wedge \cdots \wedge v_k).$$
	
	The following elaboration on Lemma \ref{lem:topFTLE} above extends that result to 
	the operator $\wedge^k D_{a(\omega)} \varphi^t_\omega$ 
	as a perturbation of $\wedge^k e^{(\alpha + \Delta) t}$ in view of Proposition \ref{h1:attractor}.
	Below, given $\bi = (i_1 , \cdots, i_k)$ we write $\Lambda_\bi := \sum_{j =1 }^k (\alpha - \lambda_{\bi_j})$. 
	\begin{lemma}\label{lem:Qgeneral}
		Assume $\alpha - \lambda_k > 0$. Let $T > 0$ and $\varepsilon > 0, \varepsilon \ll \alpha - \lambda_k$ be fixed, and let $\omega \in \Omega$ be a noise path with the property that the nonlinear term $B_\omega^t := -3 a_\alpha(\theta^t \omega)^2$ satisfies 
		\[
		\| B_\omega^t\|_{\cC(\cO)} \leq \varepsilon
		\]
		for all $t \in [0,T]$. 
		Finally, assume ${\bf v}_0 = v_0^1 \wedge \cdots \wedge v_0^k \in \wedge^k L^2$ satisfies $Q^{(k)}_{\delta}({\bf v}_0) > 0$ where $\delta > 0$ satisfies
		\[
		\ve (1 + \delta) k \leq \Lambda_{{\bf i}_0} - \Lambda_\bi - \frac{\ve (1 + \delta) k}{\delta} \, ,
		\] 
		for all $\bi \neq \bi_0$.
		Under these conditions, the $k$-blade ${\bf v}_t := \wedge^k D_{a(\omega)} \varphi^t_\omega ({\bf v}_0)$, corresponding with the time-$t$ solutions $v_t^j = D_{a(\omega)} \varphi^t_\omega(v_0^j)$ to the first variation equation \eqref{eq:var_eq_attract}, satisfies
		\[
		\frac12 \frac{\txtd}{\txtd t} Q^{(k)}_{\delta}({\bf v}_t) \geq \left( \Lambda_{\bi_0} - \frac{ \varepsilon(1 + \delta) k }{\delta}
		\right) Q^{(k)}_{\delta}({\bf v}_t) \, .
		\]
	\end{lemma}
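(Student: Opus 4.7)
The plan is to extend the invariant-cones argument of Lemma~\ref{lem:topFTLE} to the exterior power $\wedge^k H$. Since each $v_t^j$ satisfies the variational equation $\partial_t v_t^j = (\alpha + \Delta + B_\omega^t) v_t^j$, by linearity of the wedge product in each slot the blade ${\bf v}_t$ satisfies
\[
\partial_t {\bf v}_t = \hat A_0^{(k)} {\bf v}_t + \hat B_t^{(k)} {\bf v}_t,
\]
where for a bounded operator $C$ on $H$, $\hat C^{(k)}$ denotes the Leibniz-type derivation lift $\hat C^{(k)}(w_1 \wedge \cdots \wedge w_k) = \sum_{j=1}^k w_1 \wedge \cdots \wedge Cw_j \wedge \cdots \wedge w_k$. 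The key structural fact is that $\hat A_0^{(k)}$ is self-adjoint and diagonal in the basis $\{e_{\bf i}\}$ of $\wedge^k H$ with eigenvalues $\Lambda_{\bf i}$, maximized at $\Lambda_{{\bf i}_0}$ on $e_{{\bf i}_0}$ (using $\alpha > \lambda_k$).

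Next I would realize $Q_\delta^{(k)}$ as $\langle M\,\cdot,\cdot\rangle$ for the bounded self-adjoint operator $M = (1+\delta)\wedge^k \Pi_k - I$ on $\wedge^k H$, and split
\[
\tfrac12 \tfrac{\txtd}{\txtd t} Q_\delta^{(k)}({\bf v}_t) = \langle M {\bf v}_t, \hat A_0^{(k)} {\bf v}_t\rangle + \langle M {\bf v}_t, \hat B_t^{(k)} {\bf v}_t\rangle.
\]
Expanding ${\bf v}_t = \sum_{\bf i} c_{\bf i} e_{\bf i}$ and using the diagonal action of $\hat A_0^{(k)}$, a direct computation rewrites the unperturbed term as $\Lambda_{{\bf i}_0} Q_\delta^{(k)}({\bf v}_t) + \sum_{{\bf i} \neq {\bf i}_0}(\Lambda_{{\bf i}_0} - \Lambda_{\bf i}) |c_{\bf i}|^2$. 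The second sum is the spectral-gap reservoir from which the perturbation cost will be absorbed.

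For the perturbation term, the one-dimensional Sobolev embedding $V \hookrightarrow L^\infty$ gives $\|B_\omega^t\|_{L^2 \to L^2} \leq C \|B_\omega^t\|_V \leq C \varepsilon$, with the Sobolev constant absorbed into $\varepsilon$. Since $B_\omega^t$ is multiplication by a real function, it is self-adjoint on $L^2$, and the spectral theorem exhibits $\hat B_t^{(k)}$ as having eigenvalues equal to $k$-fold sums of eigenvalues of $B_\omega^t$; hence $\|\hat B_t^{(k)}\| \leq k\varepsilon$. Cauchy--Schwarz then yields $|\langle M {\bf v}_t, \hat B_t^{(k)} {\bf v}_t\rangle| \leq k \varepsilon \|{\bf v}_t\|^2$ (using $\|M\| \leq 1$ for $\delta \leq 1$). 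In the cone $\{Q_\delta^{(k)}({\bf v}_t) > 0\}$ one verifies the identity $\|{\bf v}_t\|^2 = Q_\delta^{(k)}({\bf v}_t)/\delta + (1+\delta) S/\delta$ with $S := \sum_{{\bf i} \neq {\bf i}_0}|c_{\bf i}|^2$, so the perturbation splits into a $Q_\delta^{(k)}$-proportional piece---contributing the claimed rate $-(1+\delta)k\varepsilon/\delta$---and an $S$-piece absorbed by the spectral-gap sum, using the hypothesis $\varepsilon(1+\delta)k(1 + 1/\delta) \leq \Lambda_{{\bf i}_0} - \Lambda_{\bf i}$, which is calibrated precisely for this absorption.

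The main obstacle I expect is the operator-norm bound $\|\hat B_t^{(k)}\| \leq k \|B_\omega^t\|_{L^2\to L^2}$ for the derivation lift: the naive estimate $\|v_1 \wedge \cdots \wedge v_k\| \leq \prod \|v_j\|$ is not sharp on non-orthogonal blades and does not yield the right bound directly from the Leibniz formula. Self-adjointness of $B_\omega^t$ lets one bypass this via the spectral theorem; a more general model would require the contraction-semigroup detour $\|\wedge^k e^{tB}\| \leq e^{tk\|B\|}$ followed by differentiation. The remaining steps---pinning down $M$, verifying the identity for $\|{\bf v}_t\|^2$ in the cone, and checking the absorption estimate with the stated constants---are then mechanical.
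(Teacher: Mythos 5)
Your proof is essentially correct and follows the same invariant-cone strategy as the paper: split the generator into the diagonal lift $\hat A_0^{(k)}$ (with spectrum $\{\Lambda_{\bf i}\}$) plus the perturbation lift $\hat B_t^{(k)}$, expand in the wedge Fourier basis, and absorb the perturbation into the spectral gap. Your packaging of $Q_\delta^{(k)}(\cdot,\cdot) = \langle M\,\cdot,\cdot\rangle$ with $M = (1+\delta)\wedge^k\Pi_k - I$ is a slightly cleaner way to organize the same computation the paper does directly on the quadratic form, and it even gives a marginally sharper bound (your perturbation rate is $-k\varepsilon/\delta$ rather than the stated $-(1+\delta)k\varepsilon/\delta$, which of course still implies the lemma). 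The most valuable part of your write-up is that you flag the operator-norm estimate $\|\hat B_t^{(k)}\|\le k\|B_\omega^t\|_{L^2\to L^2}$ as the nontrivial step — the paper asserts the bound $-\varepsilon(1+\delta)k\|\mathbf{v}_t\|^2$ without explicit justification, and you are right that a naive Leibniz-plus-Hadamard estimate fails because $\prod\|v_j\|$ is not controlled by $\|\mathbf{v}\|$. However, your remedy is more elaborate than needed: since $\hat B_t^{(k)}$ is only applied to the single blade $\mathbf{v}_t$, one can Gram--Schmidt the blade to an orthonormal $\{w_j\}$ with $\mathbf{v}_t = c\,w_1\wedge\cdots\wedge w_k$ and $|c|=\|\mathbf{v}_t\|$, then bound each Leibniz term $\|w_1\wedge\cdots\wedge Bw_j\wedge\cdots\wedge w_k\|\le\|Bw_j\|\le\varepsilon$ by Hadamard; this gives $\|\hat B_t^{(k)}\mathbf{v}_t\|\le k\varepsilon\|\mathbf{v}_t\|$ with no self-adjointness or spectral theorem required, and is exactly the orthogonalization trick the paper already uses to prove the negative-semidefinite wedge lemma for part (a). So your observation that the step needs an argument is correct and worth recording, but the argument needed is more elementary than either the spectral theorem or the semigroup-differentiation detour you propose.
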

	Lemma \ref{lem:topFTLE} above is a special case of Lemma \ref{lem:Qgeneral} with $k = 1$ and 
	$\delta = \sqrt{\ve}$. Fixing this value of $\delta$, we see that parallel to the argument presented around equation~\eqref{est:Qdeltat}, $Q^{(k)}_\delta ({\bf v}_0) > 0$ implies
	${\bf v}_t = \wedge^k D_{a(\omega)} \varphi^t_\omega ({\bf v}_0)$ satisfies
	\[
	Q^{(k)}_\delta({\bf v}_t) \geq Q^{(k)}_\delta({\bf v}_0) \exp \{2 t (\Lambda_{\bi_0} - 2 k \delta)\} \, .
	\]
	In particular, for $M > 1$ we have that if $Q^{(k)}_{\delta / M} ({\bf v}_0) > 0$, then 
	\[
	\| {\bf v}_t \|^2 \geq \frac{M-1}{M+\delta} \exp \{2 t (\Lambda_{\bi_0} - 2 k \delta)\} \| {\bf v}_0 \|^2\, .
	\] 

 	The proof of Proposition~\ref{prop:FTvol}(b) is now complete on taking $M$ sufficiently large and $\delta$ sufficiently small, and appealing to Proposition \ref{h1:attractor} to ensure $\|B_\omega^t\|_{\cC(\cO)}$ is sufficiently small along the time window $[0,T]$ (c.f. the end of the proof of Proposition \ref{prop:FTLE_1}(b)).  
	

	\subsubsection*{Proof of Lemma \ref{lem:Qgeneral}}
	
	To start, note the unconditional estimate
	\[
	| Q^{(k)}_\delta (v_1 \wedge \cdots \wedge v_k , w_1 \wedge \cdots \wedge w_k)|  \leq (1+\delta) \| v_1 \wedge \cdots \wedge v_k \| \| w_1 \wedge \cdots \wedge w_k\| \, . 
	\]
	Let $v_i = v_i(t) = D_{a(\omega)} \varphi^t_\omega(v_i(0))$.  Assuming $B = B^t_\omega$ is such that $\| B \|_V \leq \ve$, it follows that $\| B v\| \leq \varepsilon \| v \|$ for any $v \in H$. Therefore, 
	\begin{align*}
	\frac12 \frac{\txtd}{\txtd t} Q_\delta^{(k)} (v_1 \wedge \cdots \wedge v_k) & = 
	\sum_{j = 1}^k 
	Q_\delta^{(k)} (v_1 \wedge \cdots \wedge v_k,  v_1 \wedge \cdots \wedge \dot v_j \wedge \cdots 
	\wedge  v_k) \\
	& = 
	\sum_{j = 1}^k 
	Q_\delta^{(k)} (v_1 \wedge \cdots \wedge v_k,  v_1 \wedge \cdots \wedge (\alpha + \Delta + B) v_j \wedge \cdots 
	\wedge  v_k) \\
	& \geq \sum_{j = 1}^k 
	Q_\delta^{(k)} (v_1 \wedge \cdots \wedge v_k,  v_1 \wedge \cdots \wedge (\alpha + \Delta ) v_j \wedge \cdots 
	\wedge  v_k) \\
	& - \ve (1+\delta) k  \| v_1 \wedge \cdots \wedge v_k\|^2\,.
	\end{align*}
	We can write 
	\[
	v_1 \wedge \cdots \wedge v_k = \sum_{\bf i} v_{\bf i} e_{\bf i},
	\]
	such that
	\[
	Q^{(k)}_\delta(v_1 \wedge \cdots \wedge (\alpha + \Delta) v_j \wedge \cdots \wedge v_k) =\sum_{{\bf i}, {\bf i'}} v_{\bf i} v_{\bf i'} (\alpha - \lambda_{i_j'}) Q^{(k)}_\delta (e_{\bf i}, e_{\bf i'})\,.
	\]
	For the summands we have
	\[
	Q^{(k)}_\delta(e_{\bf i}, e_{\bf i'}) = \begin{cases}
	\delta & {\bf i} = {\bf i'} = {\bf i}_0 ,\\
	-1 & {\bf i} = {\bf i'} \neq {\bf i}_0, \\
	0 & \text{else},
	\end{cases}
	\]
	which implies
	\[
	Q^{(k)}_\delta(v_1 \wedge \cdots \wedge (\alpha + \Delta) v_j \wedge \cdots \wedge v_k) = \delta (\alpha - \lambda_j) v_{{\bf i}_0}^2 - \sum_{{\bf i} \neq {\bf i}_0 } v_{\bf i}^2 (\alpha - \lambda_{i_j})\,
	\]
	such that we obtain
	\begin{align*}
	\sum_{j = 1}^k Q^{(k)}_\delta(v_1 \wedge \cdots \wedge (\alpha + \Delta) v_j \wedge \cdots \wedge v_k)
	= \delta \Lambda_{\bi_0} v_{{\bf i}_0}^2 
	- \sum_{{\bf i} \neq {\bf i}_0} \Lambda_\bi v_{\bf i}^2 \,.
	\end{align*}
	Altogether, we have
	\begin{align*}
	\frac12 \frac{\txtd}{\txtd t} Q^{(k)}_\delta(v_1 \wedge \cdots \wedge v_k ) 
	&  \geq \delta \Lambda_{\bi_0}  v_{\bi_0}^2 - \sum_{\bi \neq \bi_0} \Lambda_{\bi} v_\bi^2 - \ve (1 + \delta) k \sum_{\bi} v_\bi^2 \\
	& = \delta \left( \Lambda_{\bi_0} - \frac{\ve (1 + \delta) k}{\delta} \right)  v_{\bi_0}^2 
	- \sum_{\bi \neq \bi_0} (\Lambda_{\bi} + \ve(1 + \delta) k) v_\bi^2 \, .
	\end{align*}
	Therefore, we may conclude
	\[
	\frac12 \frac{\txtd}{\txtd t} Q^{(k)}_\delta(v_1 \wedge \cdots \wedge v_k ) \geq \left( \Lambda_{{\bf i}_0}  - \frac{\ve (1 + \delta) k}{\delta} \right)  v_{\bi_0}^2  Q_\delta^{(k)}(v_1 \wedge \cdots \wedge v_k),
	\]
	as long as 
	\[
	\ve (1 + \delta) k \leq \Lambda_{\bi_0} - \Lambda_\bi - \frac{\ve (1 + \delta) k}{\delta} 
	\]
	for all $\bi \neq \bi_0$. This finishes the proof of Lemma~\ref{lem:Qgeneral}.

	\section{Outlook}\label{sec:outlook}


In this paper we considered a scenario where stochastic driving destroyed 
a bifurcation and showed that some ``signature'' of the bifurcation persisted
in the form of a positive FTLE on finite timescales and with positive probability. 
The results in this paper suggest several possible areas of future work, some of which
we list below: 

\medskip

\noindent {\bf Broader class of models. } 
It should be possible to extend the techniques of this paper to 
a broader class of SPDEs exhibiting synchronization phenomena ``destroying''
bifurcations, e.g., the class of higher-order models exhibited in \cite{Bloemker}, 
or systems undergoing Hopf bifurcations. 

\medskip

\noindent {\bf Quantitative estimates on FTLE. }
Unaddressed by our work is the concrete value of the 
probability of `seeing' a positive FTLE on a given timescale
for a statistically stationary initial condition. In view of the convergence of the FTLE
to the asymptotic Lyapunov exponent with probability 1, this kind of quantitative information
amounts to a large deviations estimate. 
This is naturally tied to the ergodic properties of the Markov process $(u_t, \hat v_t)$, where
$u_t$ is a solution to the SPDE, $v_t$ is a solution to the first variation equation, and 
$\hat v_t = v_t / \| v_t \|$, see e.g. \cite{arnold1984formula} in the context of SODE. However, 
the ergodic properties of $(u_t, \hat v_t)$ are difficult to study due to the normalization by 
$\| v_t\|$ and the difficult-to-rule-out possibility that the asymptotic Lyapunov exponent is $-\infty$. A rigorous proof of the fact that the asymptotic Lyapunov exponent is negative was recently obtained in~\cite{GT22}.
However, a better understanding of the ergodic properties of this process remains an interesting open problem for a large class
of systems.

\medskip

 \noindent {\bf Arbitrary initial data. } 
It is an interesting question, outside the scope of this work, to provide lower bounds on the FTLE $\Lambda_1$ for a trajectory initiated away from $0$ (large initial data). Suppose, for instance, that one could arrange so that the stationary measure $\rho$ is fully supported in $V_\gamma$, and that the corresponding Markov semigroup is strong / ultra Feller \cite{seidler2001note}.  According to~\cite[\S 8.3.1]{cerrai:ergodicity} we know that these properties hold in $H$ under our assumptions.
It would then hold that \emph{any} initial data $u_0 \in V_\gamma$ enters a $V_\gamma$-small neighborhood of $0$ given enough time (with probability 1). This, in conjunction with the arguments in Section \ref{sec:proofs}, would imply 
the following: for all $u_0 \in V_\gamma$ and $T > 0$, there exists $t > 0$ and a set $\mathcal A \in \mathcal F_0^{t + T}$ such that for $\omega \in \mathcal A$, 
\[
\| D_{u_t} \varphi^s_{\theta^t \omega} \| \approx e^{s (\alpha - \lambda_1)} \quad \text{ for all } \quad s \in [0,T] \, , 
\]
along the lines of Theorem \ref{thm:topLEEst}. A significantly harder question, however, is to provide an estimate of $\| D_{u_0} \varphi^s_{\omega}\|$, initiated at time zero, for potentially large initial data $u_0$ and for times $s > T(u_0)$. While it should be possible to steer the trajectory of $(u_t)$ close to $0$ and apply the arguments of Section \ref{sec:proofs}, what is missing is an argument to bound $D \varphi^t_\omega$ from below during the `transient' time period before $u_t$ has been `steered' toward $0$.

	

	\section*{Acknowledgment}

\noindent A. Blumenthal was supported by National Science Foundation grant DMS-2009431.
	
\noindent M. Engel has been supported by Germany's Excellence Strategy -- The Berlin Mathematics Research Center MATH+ (EXC-2046/1)-- project ID: 390685689 (subprojects AA1-8 and AA1-18). He further thanks the German Research Foundation (DFG) for support via the SPP2298 and the CRC1114.
 
\noindent 
The authors thank Christian Kuehn for pointing to references \cite{barret2015sharp, berglund2013sharp} and to the referees for the numerous valuable suggestions. The first author thanks Sam Punshon-Smith for valuable discussions leading to the approach taken in Section \ref{sec:proofs}.

	
	\appendix
	
	\section{Appendix}\label{appendix}
	
	
	\subsection{Proof of Proposition~\ref{h1:attractor}}
	
 Proposition \ref{h1:attractor} will be deduced from the following. 
	\begin{lemma}\label{lem:propagationReg}
	For any $T > 0$ and $\varepsilon > 0$ there is an event $\mathcal A_T \in \mathcal F_{0}^T$ of positive probability and a real number $\eta = \eta(\varepsilon, T) > 0$ such that for any $u_0 \in V_\gamma$ with $\| u_0\|_{V_\gamma} < \eta$, we have that $\| u_t\|_{V_\gamma} < \varepsilon$ for all $t \in [0,T]$ and all $\omega \in \mathcal A_T$. 
	\end{lemma}
	
	\begin{proof}[Proof of Proposition \ref{h1:attractor} assuming Lemma \ref{lem:propagationReg}]
		By Proposition \ref{prop:statmeas}, the process $(u_t)$ admits a unique stationary measure $\rho$ on $H$ with $\rho(V_\gamma) = 1$, hence $a_\alpha(\omega) \in V_\gamma$ with probability 1. Let $\varepsilon, T > 0$ be fixed let and $\eta = \eta(\varepsilon, T) > 0, \mathcal A_T \in \mathcal F_0^T$ be as in Lemma \ref{lem:propagationReg}. Local positivity of $\rho$ ensures $\bP(\mathcal A_-) > 0$, where 
		\[ \mathcal A_- = \{ \| a_\alpha \|_{V_\gamma} < \eta \} \, .\] 
		Let now $\mathcal A = \mathcal A_- \cap \mathcal A_T$, and note that since $\mathcal A_- \in \mathcal F_{-\infty}^0$, we have that $\mathcal A_-, \mathcal A_T$ are independent, hence
		\[
		\bP(\mathcal A) = \bP(\mathcal A_- ) \cdot \bP(\mathcal A_T) > 0 \,,
		\]
		which completes the proof. 
	\end{proof}
	
	\begin{proof}[Proof of Lemma \ref{lem:propagationReg}]
	

The main idea is to exploit regularizing properties of analytic semigroups. 
	Recalling the SPDE~\eqref{spde_intro}
	\begin{align}\label{spde}
	\txtd u &= [\Delta u + \alpha u \underbrace{- u^3}_{:=f(u)}]~\txtd t + \txtd W_t\nonumber\\
	& =  [\Delta u +\alpha u ]~\txtd t + f(u)~\txtd t + \txtd W_t,
	\end{align}
	we subtract the Ornstein-Uhlenbeck process, i.e. the solution of the linear SPDE
	\begin{align*}
	\txtd z = [\Delta z  +\alpha z]~\txtd t + \txtd W_t.
	\end{align*}
	This solution is given by the convolution
	\begin{align*}
	z(t) =\int_0^t T(t-r)~\txtd W_r,
	\end{align*}
	where $(T(t))_{t\geq 0}$ is the shifted heat semigroup with $\alpha$, i.e.~$T(t):=e^{(\Delta +\alpha)t}$. 
	According to Lemma~\ref{lem:stochConvolution} this belongs to $C([0,T];V_\gamma$ $\mathbb{P})$-a.s. Therefore the set
	$$ \cA_T:=\Big\{  \omega\in\Omega : \sup\limits_{t\in[0,T]} \|z(t)\|_{V_\gamma} \leq \eta\Big\}\in \cF^T_0$$
	has positive probability. This is sufficient for our aims since we are only interested in a finite-time statement. We further fix $\omega\in \cA= \cA_{-}\cap\cA_T$ as in Proposition~\ref{h1:attractor} and study on $V_\gamma$ the PDE with random non-autonomous coefficients
	\begin{align}\label{rpde}
	\txtd  {\tilde u} = [\Delta \tilde{u}   + \alpha \tilde{u} ]~\txtd t + f(\tilde{u}+z)~\txtd t.
	\end{align}
(Since $(T(t))_{t\geq 0}$ is an analytic semigroup, $\tilde{u}$ is differentiable on $V_\gamma$ for $t>0$). 
We assume that the initial data	 $\tilde{u}_0:=\tilde{u}(0)=a_\alpha(\omega)$.  
We now prove that for any given $\varepsilon > 0$ and $\omega\in\cA$ we have
		\begin{align}\label{aim}
		\|a_{\alpha}(\theta^s\omega)\|_{V_\gamma}\in(0,\varepsilon), \text{ for} ~s\in[0,T].
		\end{align}
	To this aim we use the fact the semigroup $(T(t))_{t\geq 0}$ acts on all function spaces $V_\gamma$ together with the classical estimate
		\begin{align}
		\|T(t)\|_{\mathcal B (V_\gamma)} &\leq e^{(-\lambda_1+\alpha)t},~~t>0 \label{T_V_est}.
		\end{align}
The cubic nonlinearity $f:V_\gamma\to V_\gamma$ is locally Lipschitz for $\gamma\in(\frac{1}{4},\frac{1}{2})$, i.e. there exists a constant $\widetilde{l}:=\widetilde{l}(\|\tilde{u}_1\|_{V_\gamma}, \|\tilde{u}_2\|_{V_\gamma})$ such that
		\begin{align*}
		\|f(\tilde{u}_1) -f(\tilde{u}_2)\|_{V_\gamma} \leq \tilde{l} \|\tilde{u}_1-\tilde{u}_2\|_{V_\gamma}, ~~\text{ for }~ \tilde{u}_1,\tilde{u}_2\in B\subset V_\gamma,
		\end{align*}
		where $B$ is a bounded subset of $V_\gamma$. Moreover there exists an increasing function $a:\mathbb{R}^+\to\mathbb{R}^+$ such that
		\begin{align}\label{en:f}
			\langle F(y+w), y \rangle_{V_\gamma} \leq a(\|w\|)(1+\|y\|),~\text{ for all } y,w\in V_\gamma,
 		\end{align}
 	see~\cite[(H3),~p.~130]{stannat} and~\cite[Section 7.2.1]{DaPratoZabczyk}. Regarding this together with $\|\cdot\|_H\leq \|\cdot\|_{V_\gamma}$ further results in
 	\begin{align*}
 		\frac{1}{2}\frac{\txtd}{\txtd t} \|\tilde{u}(t)\|^2_{V_\gamma} &\leq \langle \Delta \tilde{u}(t) +\alpha \tilde u(t), \tilde{u}(t) \rangle_{V_\gamma} +\langle F(\tilde{u}(t) +z(t) ), \tilde{u}(t) \rangle_{V_\gamma}\\
 		& \leq  (-\lambda_1+\alpha) \|\tilde{u}(t)\|^2_{V_\gamma} +a(\|z(t)\|_{V_\gamma}) (1+\|\tilde{u}(t)\|_{V_\gamma})\\
 		& \leq (-\lambda_1+\alpha +a(\|z(t)\|_{V_\gamma}) )\|\tilde{u}(t)\|^2_{V_\gamma} +a(\|z(t)\|_{V_\gamma}).
 	\end{align*}
 Here we estimated $\|\tilde{u}(t)\|_{V_\gamma} \leq \|\tilde{u}(t)\|^2_{V_\gamma}$, since if $\|\tilde{u}(t)\|_{V_\gamma}\leq 1$, then the statement automatically holds.
 Gronwall's inequality now entails
 \begin{align*}
 	\|\tilde{u}(t)\|^2_{V_\gamma} & \leq e^{(-\lambda_1+\alpha) t +\int_0^t a(\|z(s)\|_{V_\gamma})~\txtd s } \|\tilde{u}_0\|_{V_\gamma} +\int_0^t e^{(-\lambda_1 +\alpha)(t-s) +\int_s^t a(\|z(r)\|_{V_\gamma})~\txtd r } a(\|z(s)\|_{V_\gamma})~\txtd s\\
 	& \leq e^{(-\lambda_1 +\alpha + a(\eta))t } \|\tilde{u}_0\|_{V_\gamma} +a(\eta) \int_0^t e^{(-\lambda_1+\alpha+a(\eta))(t-s)}~\txtd s,
 \end{align*}
 	where we used the fact that $a$ is increasing and that $\|z(s)\|_{V_\gamma} \leq \sup\limits_{s\in[0,T]} \|z(s)\|_{V_\gamma}\leq \eta $ for $\omega\in\cA$. 
 		Regarding that $\|u(t)\|_{V_{\gamma}} \leq \|\tilde{u}(t)\|_{V_\gamma} +\|z(t)\|_{V_\gamma}$, $u_0=a_\alpha(\omega)$ and consequently $u(t)=a_\alpha(\theta^t\omega)$, the statement follows.
	\end{proof}

	\newpage
	\bibliographystyle{abbrv}
	
	\bibliography{Bibliography2.bib}

\end{document}